\documentclass{amsart}
\usepackage{amsmath,amssymb}
\usepackage{graphicx,tikz,mathtools}

\usepackage{color}
\definecolor{Caca}{RGB}{193,124,250}

\newtheorem{theorem}{Theorem}[section]
\newtheorem{proposition}[theorem]{Proposition}
\newtheorem{corollary}[theorem]{Corollary}

\newtheorem{lemma}[theorem]{Lemma}

\theoremstyle{definition}

\theoremstyle{remark}
\newtheorem{remark}[theorem]{Remark}



\newcommand{\de}{\delta}
\newcommand{\ep}{\varepsilon}

\newcommand{\ga}{\gamma}

\newcommand{\la}{\lambda}
\newcommand{\om}{\omega}
\newcommand{\si}{\sigma}
\newcommand{\te}{\theta}
\newcommand{\vp}{\varphi}

%
\newcommand{\De}{\Delta}

\newcommand{\Si}{\Sigma}
\newcommand{\Om}{\Omega}

%

%

%

%

%

%

\def\NN{\mathbb{N}}
\def\RR{\mathbb{R}}

\def\ZZ{\mathbb{Z}}

\def\bD{\mathbb{D}}

\newcommand{\bP}{\mathbb{P}}

\renewcommand\SS{\mathbb{S}}
%

\newcommand{\cA}{{\mathcal A}}
\newcommand{\cB}{{\mathcal B}}
\newcommand{\cC}{{\mathcal C}}

\newcommand{\cJ}{{\mathcal J}}

\newcommand{\cN}{{\mathcal N}}

\newcommand{\cV}{{\mathcal V}}

%

%

%

\newcommand{\pd}{\partial}
\newcommand\minus\backslash

\newcommand\lan\langle
\newcommand\ran\rangle

%



\DeclareMathOperator\Div{div}

\DeclareMathOperator{\tr}{Tr}

\renewcommand\leq\leqslant
\renewcommand\geq\geqslant
%
\newlength{\intwidth}

%

%
\addtolength{\parskip}{3pt}

\newcommand\BOm{\overline{\Om}}

\numberwithin{equation}{section}

 \DeclareMathOperator\curl{curl}

\DeclareMathOperator\Diff{Diff}
\newcommand\Per{\mathrm{Per}_{\mathrm{hyp}}}
\newcommand\diff{\Diff^+_\mu(\BSi)}

\newcommand\lB{{\bar B}}
\newcommand\tcB{\cB(\BOm)}
\newcommand\tB{\widetilde B}

\newcommand\BSi\Si 
\newcommand\cU{\mathcal{U}}

\newcommand\cUni{\cU_{\mathrm{NI}}}
\newcommand\MS{\cU_{\mathrm{MS}}}

\newcommand\NA{\cU_{\mathrm{twist}}}
\newcommand\cUi{\cU_{\mathrm{I}}}

\begin{document}

\title[Obstructions to topological relaxation for generic magnetic fields]{Obstructions to topological relaxation\\ for generic magnetic fields}

 \author{Alberto Enciso}
 \address{Instituto de Ciencias Matem\'aticas, Consejo Superior de
   Investigaciones Cient\'\i ficas, 28049 Madrid, Spain}
 \email{aenciso@icmat.es}

 \author{Daniel Peralta-Salas}
 \address{Instituto de Ciencias Matem\'aticas, Consejo Superior de
   Investigaciones Cient\'\i ficas, 28049 Madrid, Spain}
 \email{dperalta@icmat.es}

\newcommand{\cambios}[1]{\textcolor{Caca}{#1}}

%
%
\begin{abstract}
For any axisymmetric toroidal domain $\Om\subset \RR^3$ we prove that there is a locally generic set of divergence-free vector fields that are not topologically equivalent to any magnetohydrostatic (MHS) equilibrium in $\Om$. Each vector field in this set is Morse--Smale on the boundary, does not admit a nonconstant first integral, and exhibits fast growth of periodic orbits; in particular this set is residual in the Newhouse domain. The key dynamical idea behind this result is that a vector field with a dense set of nondegenerate periodic orbits cannot be topologically equivalent to a generic MHS equilibrium. On the analytic side, this geometric obstruction is implemented by means of a novel rigidity theorem for the relaxation of generic magnetic fields with a suitably complex orbit structure.
\end{abstract}

\maketitle

\section{Introduction}

The topological complexity of magnetohydrostatic (MHS) equilibria (which formally solve the same equations as  3D stationary Euler flows) has played a central role in the geometric study of the MHD and Euler equations since Arnold's groundbreaking work in the 1960s. In spite of the large literature on this subject~\cite{Ar66,Moffatt,AK,Annals,Brenier,BW,BFV}, little is known about the accessibility of these equilibrium configurations, that is, of their connection with long-time limits of time-dependent solutions of the equations. This is not surprising, as extracting precise information about the long-time evolution of these PDEs is notoriously hard even in two dimensions~\cite{Drivas,KiselevSverak,Sverak}.

Magnetic relaxation is a mechanism that aims to obtain MHS equilibria as long-time limits of a topology-preserving evolution equation, hopefully easier to analyze that the original ideal MHD equations. Although the analysis of a number of magnetic relaxation equations has attracted considerable attention by themselves, they are introduced as a mean to obtain MHS equilibria (or stationary Euler flows) with prescribed streamline topology.

From a physical viewpoint, ideal magnetic relaxation plays a key role in the study of plasmas in stellar atmospheres~\cite{Cande}. In an influential paper from the early seventies, Parker proposed~\cite{Parker,Parker2} in this context that for most magnetic field topologies, there does not exist a smooth MHS equilibrium with that topology. As a consequence of this, he posited that the field should typically relax towards a state with tangential magnetic field discontinuities, a process that he termed ``topological dissipation'' because its onset is governed by the field line topology. Finding a rigorous proof (and even a precise statement) of Parker's hypothesis about the non-existence of MHS equilibria for most field topologies (which has
been referred to as ``Parker's problem'' or ``Parker's Magnetostatic Theorem'' in the literature; see e.g.~\cite{Low,Cande,PoHo}) is an open problem, and constitutes an important motivation for the theorem we prove in this article.

Our objective in this paper is to obtain generic obstructions for a divergence-free vector field~$B_0$ to be \emph{topologically equivalent} to some MHS equilibrium. By ``topologically equivalent'', we mean that there exists a $C^\infty$ volume-preserving diffeomorphism $X$ of the corresponding domain such that $\lB:= X_*B_0$ is an MHS equilibrium. This equivalence relation is also called {\em topological relaxation}\/ in the physics literature, to emphasize the fact that it is a purely topological property related to magnetic relaxation but independent of the specific model of magnetic relaxation one considers.

\subsection{Models for magnetic relaxation}

A non-resistive incompressible plasma is described by the MHD~system
\begin{gather}
	\pd_t B= B\cdot\nabla v - v\cdot\nabla B\,,\label{E.magdyn}\\
	\pd_t v + v\cdot\nabla v=\nu\De v+ B\times\curl B+\nabla P\,,\notag\\
	\Div B=\Div v=0\,. \notag
\end{gather}
This model is still somewhat artificial in that one takes zero resistivity to ensure that the evolution preserves the topology of~$B$ (i.e., that of the magnetic lines) but nonzero viscosity~$\nu>0$. However, this turns out to be a good approximation in the study of stellar atmospheres~\cite{Cande}. At the level of wishful thinking~\cite{Arnold, Moffatt}, one can expect that the inclusion of viscosity might force~$v$ to tend to~0 as $t\to\infty$, for certain initial data. If one assumes that $B(t,x)$ tends to a time-independent limit $\lB (x)$ and sets $v=0$, one formally obtains that the limit magnetic field satisfies the MHS equations:
\[
\lB\times\curl \lB+\nabla P=0\,,\qquad \Div \lB=0\,.
\]

From a mathematical point of view, a more convenient type of magnetic relaxation equation consists of the magnetic dynamo equation~\eqref{E.magdyn} with a simpler prescription for the divergence-free field~$v$. For example, the model
\begin{equation}\label{E.gamodel}
\pd_t B= B\cdot\nabla v - v\cdot\nabla B\,,\qquad v:=(-\De)^{-\ga}\,\bP (B\cdot\nabla B)\,,
\end{equation}
where $\bP$ denotes the Leray projector,
has attracted much attention since the work of Moffatt in the 1980s~\cite{Moffatt,Moffatt2}. For example, when $\ga=0$, the problem admits in 2D a certain kind of global dissipative weak solutions~\cite{Brenier}, however, not even local existence of strong solutions is known. When $\ga$ is large enough, it has been recently shown~\cite{BFV} that the equation is globally well-posed on Sobolev spaces, and that the field~$v$ actually tends to~0 as $t\to\infty$; this does not suffice to conclude that $B$ tends to an MHS equilibrium along a sequence of times tending to infinity.

For the purposes of this article, it is worth emphasizing that the special role played by the dynamo equation is due to the fact that it ensures that the topology of~$B$ is preserved by the evolution as long as the solutions remain smooth. More precisely, let $X^t$ denote the time $t$ flow of the vector field~$v$.
Then Equation~\eqref{E.magdyn} simply means that $B(x,t)$ can be written in terms of the initial datum $B_0(x)$ via the push-forward of the volume-preserving diffeomorphism $X^t$:
\[
B(t,x)=X^t_* B_0(x)\,,
\]
so that $B(t,\cdot)$ and $B_0$ are topologically equivalent. Of course, this does not ensure that the limit $\lim_{t\to \infty} B(t,\cdot)$, if it exists, must be topologically equivalent to~$B_0$ as well; although there are scant rigorous results in this direction, tangential field discontinuities and lost of field line topology are to be typically expected~\cite{Moffatt21,Komen22}. These discontinuities are a key ingredient in the mechanism Parker proposed to explain the heating of the solar corona~\cite{Parker,Parker2}.

\subsection{Obstructions to topological relaxation}

Let us now introduce the setting in which we shall consider the problem of topological relaxation. Throughout, $\Om\subset\RR^3$ will be an {\em axisymmetric toroidal domain with analytic boundary}. Thus~$\BOm$ is the body of revolution generated by the closed section
\[
\Si:=\BOm\cap\{x_1>0,\; x_2=0\}=\BOm\cap\{\te=0\}\,,
\]
which is diffeomorphic to the closed unit disk $\overline{\mathbb D}$ and does not intersect the $x_3$-axis.

In the solar physics literature it is customary to assume that the vector fields are {\em braided}, see e.g.~\cite{YH,Cande,Prior}. For the purposes of this paper, and without any essential loss of generality~\footnote{A generalized notion of ``braided'' vector field~\cite{Prior} assumes the existence of a disk-like foliation of $\Om$, transverse to the boundary, which is transverse to the field. Such a foliation is diffeomorphic to the standard foliation $\{\te=const\}$ we consider in this article, and all the proofs can be adapted to this setting with minor modifications.}, one defines the class of braided fields as the space of divergence-free vector fields with positive swirl function in $\Om$, that is,
\[
\tcB:=\{B_0\in C^\infty(\BOm,\RR^3): \Div B_0=0,\; B_0\cdot N=0,\; B_0^\te >0\}\,.
\]
Here $B_0^\te$ denotes the $\te$-component of the vector field~$B_0$ in cylindrical coordinates,
\[
B_0= B_0^r(r,\te,z)\, \partial_r+ B_0^\te(r,\te,z)\, \partial_\te + B_0^z(r,\te,z)\, \partial_z\,,
\]
and $N$ is the unit outer normal  vector on $\partial\Om$. Here $\{\partial_r,\partial_\theta,\partial_z\}$ is the usual holonomic basis in cylindrical coordinates. Note that we are {\em not}\/ assuming that $B_0$ is axisymmetric.

The following is the main result of this article, which characterizes a locally generic set of braided fields that are not topologically equivalent to any smooth MHS equilibrium. The characterization is done in terms of the Poincar\'e  (or first return) map of the braided field, which is well defined because of the transversality condition $B_0^\te>0$. For the statement, we recall that a diffeomorphism of the curve~$\pd\Si$, which is diffeomorphic to the circle~$\SS^1$, is {\em Morse--Smale}\/ if it has finitely many periodic points, all of which are hyperbolic (and therefore either expanding or contracting).

\begin{theorem}\label{T.main}
	The subset of braided fields that are not topologically equivalent to any smooth magnetohydrostatic equilibrium on~$\Om$ is locally generic in~$\tcB$.
	
	More precisely, let $B_0\in \tcB$ be a vector field whose Poincar\'e map $\Pi_{B_0}:\BSi\to\BSi$ satisfies the following conditions:
	\begin{enumerate}
		\item The restriction $\Pi_{B_0}|_{\pd\Si}$ is a Morse--Smale diffeomorphism of~$\pd\Si$.
		\item $\Pi_{B_0}$ is non-integrable. That is, there does not exist a nonconstant function $h\in C^\infty(\BSi)$ such that $h\circ \Pi_{B_0}=h$.
		\item For all $j,k\geq1$,
		\[
		\limsup_{n\to\infty} \frac{\#\Per(\Pi_{B_0},n)}{N_{j,k}(n)}=\infty\,,
		\]
		where $\#\Per(\Pi_{B_0},n)$ is the number of hyperbolic periodic points of the Poincar\'e map $\Pi_{B_0}$ whose least period is~$n$ and where $\{N_{j,k}(n)\}_{j,k,n=1}^\infty$ is certain fixed countable set of positive integers (whose dynamical meaning will become clear in the proof).
	\end{enumerate}
	Then there does not exist a smooth volume-preserving diffeomorphism $X:\BOm\to\BOm$ such that $\lB:= X_*B_0$ is an MHS equilibrium on~$\Om$, that is,
	\begin{equation}\label{E.statEuler}
	\lB\times \curl \lB+\nabla P=0\,,\qquad \Div \lB=0\,,\qquad \lB|_{\pd\Om}\cdot N=0		
	\end{equation}
	for some $P\in C^\infty(\BOm)$. Furthermore, the set of vector fields satisfying conditions (i)--(iii) is dense in a nonempty open subset of~$\tcB$.
\end{theorem}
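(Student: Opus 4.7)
I proceed by contrapositive: assume a volume-preserving diffeomorphism $X:\BOm\to\BOm$ makes $\lB:=X_*B_0$ solve~\eqref{E.statEuler} and derive a contradiction from~(i)--(iii). Topological equivalence transports dynamical data cleanly, since $X(\BSi)$ is a cross-section for~$\lB$ and $X|_{\BSi}$ conjugates $\Pi_{B_0}$ with the Poincar\'e map $\Pi_{\lB}$ of $\lB$ on~$X(\BSi)$; hence $\#\Per(\Pi_{\lB},n)=\#\Per(\Pi_{B_0},n)$ and the non-integrability and boundary Morse--Smale properties are inherited by $\Pi_{\lB}$. Thus ``no such $X$ exists'' reduces to showing that no smooth MHS equilibrium has a Poincar\'e map satisfying~(i)--(iii).

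\textbf{Dichotomy and the first-integral cases.} MHS equilibria fall into the classical dichotomy: (a) $P$ is nonconstant on $\BOm$, or (b) $P$ is constant and $\lB\times\curl\lB\equiv 0$. In case~(b), wherever $\lB\ne 0$ one has $\curl\lB=f\lB$ for a smooth function $f$, and $\Div(\curl\lB)=0$ gives $\lB\cdot\nabla f=0$, so either $f$ is nonconstant (sub-case b1) or $f$ equals a constant $\la$ (sub-case b2). In cases (a) and (b1), $\lB$ admits a nonconstant smooth first integral $h\in\{P,f\}$; the pull-back $h\circ X$ is a nonconstant smooth first integral of~$B_0$ on~$\BOm$, since $B_0\cdot\nabla(h\circ X)=(X_*B_0\cdot\nabla h)\circ X=(\lB\cdot\nabla h)\circ X=0$. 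Because $B_0\in\tcB$ is braided, strict positivity of $B_0^\te$ ensures that every $B_0$-orbit crosses~$\BSi$ infinitely often, so orbits through $\BSi$ cover $\BOm$. Hence $(h\circ X)|_{\BSi}$ is necessarily nonconstant and satisfies $(h\circ X)|_{\BSi}\circ\Pi_{B_0}=(h\circ X)|_{\BSi}$, contradicting the non-integrability~(ii).

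\textbf{Strong Beltrami fields and rigidity.} Sub-case (b2) is the substantive one: $\lB$ is then a real-analytic eigenfield of curl tangent to~$\pd\Om$, lying in one of the finite-dimensional eigenspaces $V_k$ associated to the discrete spectrum $\{\la_k\}$ of curl with vanishing normal component on~$\pd\Om$. The heart of the proof is a rigidity statement saying that, once the Morse--Smale boundary condition~(i) is imposed, the Poincar\'e map of any such $\lB$ has its periodic orbit count controlled, for each pair $(j,k)$ parameterizing complexity classes within~$V_k$, by a prescribed sequence $N_{j,k}(n)$; condition~(iii) then violates these bounds under the conjugacy, delivering the required contradiction. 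I expect this step to carry the main technical weight of the theorem: one must quantify analytic rigidity of Beltrami fields uniformly across the countable collection of complexity classes, and account for the fact that the conjugating diffeomorphism~$X$ is only smooth, so analyticity of $\lB$ translates into constraints on $\Pi_{B_0}$ only through topologically invariant features (hyperbolic periodic orbits, their type and count).

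\textbf{Genericity.} For the density statement, one first constructs an explicit $B_\star\in\tcB$ whose Poincar\'e map possesses a hyperbolic periodic point with a homoclinic tangency in the interior of $\BSi$ while being Morse--Smale on~$\pd\BSi$; such $B_\star$ arises from a braided perturbation of an integrable model localized away from the boundary. A suitable neighborhood of $B_\star$ then yields a Newhouse open set $\cU\subset\tcB$ in which $\Pi_{B_0}$ exhibits persistent homoclinic tangencies. By Kaloshin's theorem, a $C^\infty$-residual subset of~$\cU$ has periodic orbit growth faster than any prescribed countable family of sequences, hence in particular faster than $N_{j,k}(n)$ for every $(j,k)$, giving~(iii). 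The non-integrability~(ii) is a standard residual condition (persistent hyperbolic sets destroy first integrals under small $C^\infty$-perturbations), while the Morse--Smale condition on~$\pd\BSi$ is open (Peixoto) and holds throughout~$\cU$ by construction. The intersection of these Baire-generic conditions is then dense in~$\cU$, completing the density claim.
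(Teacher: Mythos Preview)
Your outline tracks the paper's strategy closely in cases (a) and (b1), but there is a genuine gap in sub-case (b2). You assert that once $\curl\lB=\la\lB$ with $\lB\cdot N=0$ on $\pd\Om$, the field $\lB$ ``lies in one of the finite-dimensional eigenspaces $V_k$ associated to the discrete spectrum.'' This does not follow: for \emph{every} real $\la$ the boundary value problem $\curl v=\la v$, $v\cdot N=0$ admits nontrivial solutions (namely those with nonzero harmonic part $\int_\Om v\cdot h_\Om\,dx$), and the discrete spectrum $\{\la_j\}$ arises only after imposing orthogonality to the harmonic field $h_\Om$. So a priori $\la$ ranges over the continuum, the family of candidate equilibria is uncountable, and the periodic-point counting argument collapses: you cannot cover an uncountable parameter space by countably many compacta $K_{j,k}$ to define the $N_{j,k}(n)$.

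The paper closes this gap in Lemma~\ref{L.laeigen}, and this is the one place where the axisymmetry of $\Om$ is essential. Differentiating the Beltrami system in $\te$ shows that $w:=\partial_\te\lB$ (componentwise) satisfies $\curl w=\la w$, $w\cdot N=0$, and is automatically $L^2$-orthogonal to $h_\Om$. If $\la\notin\{\la_j\}$ this forces $w=0$, so $\lB$ is axisymmetric; it then has a stream function $\psi(r,z)$ as a smooth first integral, which must be constant by~(ii), leaving $\lB=\lB^\te(r,z)\,\partial_\te$ with Poincar\'e map equal to the identity --- contradicting~(ii). You should build this step into your argument. Two secondary points: the fast-growth input in the area-preserving category is Asaoka's theorem (residual in the Newhouse domain of $\diff$), not Kaloshin's; and rather than constructing an explicit $B_\star$, the paper transfers residual sets from $\diff$ to $\tcB$ via a continuous, onto, open map $\cJ$ built from Dacorogna--Moser and Treschev's suspension theorem.
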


\begin{remark}
In fact, Condition~(ii) can be substituted by the stronger but more transparent property that the periodic points of the diffeomorphism $\Pi_{B_0}$ are all nondegenerate (i.e., hyperbolic or elliptic) and dense in $\Sigma$. This easily implies that $\Pi_{B_0}$ is non-integrable, and is the main geometric obstruction to relaxation that underlies the theorem. One should emphasize that ergodicity cannot be used in this context to reach an analogous conclusion, since generic divergence-free fields are not ergodic as a consequence of Moser's twist theorem. Details are given in Section~\ref{S.non-integrable}.
\end{remark}
\begin{remark}
The open set of divergence-free vector fields where the fields that are not topologically conjugate to an MHS equilibrium is dense is directly connected with the so-called Newhouse domain of area-preserving diffeomorphisms of the disk~\cite{Duarte,Newhouse}, which we shall describe briefly in Appendix~\ref{A.Newhouse}. If the Newhouse domain turns out to be dense, as conjectured by some authors~\cite{Tura15}, one would infer that the set of vector fields satisfying conditions~(i)--(iii) is in fact dense in the whole space~$\tcB$.
\end{remark}
\begin{remark}
Since the space $\tcB$ of braided vector fields is an open subset of the space of smooth divergence-free vector fields on $\BOm$ that are tangent to the boundary, it is clear that the subset of fields that are not topologically equivalent to any smooth MHS equilibrium obtained in Theorem~\ref{T.main} is also locally generic in this space.
\end{remark}
\begin{remark}
In view of some applications in plasma physics, it would also be interesting to prove an analog of Theorem~\ref{T.main} for braided magnetic fields in axisymmetric toroidal shells, i.e., axisymmetric domains whose transverse section is diffeomorphic to an annulus instead of a disk. Most of the proof can be adapted to this setting, but an additional difficulty that arises is that the space of harmonic fields on $\Om$ has dimension 2 in this case, so the proof of Lemma~\ref{L.laeigen} breaks down in this setting.
\end{remark}

The interest of Theorem~\ref{T.main} is threefold. First, it provides a rigorous formulation of the fact that for ``most'' well-behaved initial data one cannot hope to obtain equilibrium states of the same topology as a long-time limit of the  solution to the corresponding Cauchy problem, and that this property is independent of the specific evolution equations one considers. Second, while there are many heuristic arguments about why topological magnetic relaxation should be generically impossible~\cite{Parker2,Moffatt21}, there are essentially no rigorous results in this direction. Third, as we shall see shortly, the argument we use to prove the statement is rather involved, and combines ideas about generic volume-preserving dynamical system, spectral theory, and some symmetry properties of MHS equilibria; we think that some of these methods may be useful to study different topological problems in plasma physics.

A final observation is that there are elementary obstructions that prevent a magnetic field from relaxing to a smooth MHS equilibrium of the same topology. Specifically, a lemma due to Cieliebak and Volkov~\cite[Lemma 2.3]{Cieliebak} ensures that if the magnetic field~$B_0$ exhibits a Reeb component, $B_0$ cannot be topologically equivalent to an MHS equilibrium, and one can use the theory of plugs to obtain significant generalizations of this~\cite{PRT}. For the benefit of the reader, in Appendix~\ref{A.Reeb} we recall the proof of~\cite[Lemma 2.3]{Cieliebak} and discuss some implications for relaxation using scaled Wilson-type plugs. Note, however, that these results have no bearing in the context considered in this paper because braided fields cannot exhibit any Reeb components.

This article is organized as follows. We present the proof of Theorem~\ref{T.main} in Section~\ref{S.outline}. To streamline the presentation, the proofs of many auxiliary results are relegated to Sections~\ref{S.cJ},~\ref{S.non-integrable} and~\ref{S.rigidity}. In Section~\ref{S.cJ} we show how to obtain genericity results for braided fields using diffeomorphisms of the disk. The fact that most braided fields are non-integrable is established in Section~\ref{S.non-integrable}. Lastly, in Section~\ref{S.rigidity} we prove a rigidity theorem for the topological relaxation of generic braided fields, which shows that under certain dynamical assumptions, the only MHS equilibria the field can relax to are a certain kind of Beltrami fields. The paper concludes with three appendices, where we recall the aforementioned lemma of Cieliebak and Volkov~\cite{Cieliebak}, record some facts about the so-called Newhouse domain of area-preserving diffeomorphisms of the disk and show that the obstructions for topological relaxation presented in Theorem~\ref{T.main} also work in the setting of ideal compressible MHD relaxation.

\section{Proof of the main theorem}
\label{S.outline}

In this section we prove Theorem~\ref{T.main} modulo several technical results that will be established in forthcoming sections. We naturally endow the space of braided fields $\tcB$ with the $C^\infty$ topology. Since the space of $C^\infty$ divergence-free vector fields on $\BOm$ tangent to the boundary is a complete Baire metric space, its open subset $\tcB$ with the induced topology is a Baire metric space too. Throughout, we say that a subset of $\tcB$ is {\em locally generic}\/ if it is residual in some open subset of $\tcB$. We recall that a set is {\em residual}\/ if it is a countable intersection of sets whose interior is dense. In particular, residual sets of~$\tcB$ are dense.

In the proof of Theorem~\ref{T.main}, a major role is played by the Poincar\'e map of a field~$B\in\tcB$. To define it,
consider the disk-like closed section
\[
\Si:=\BOm\cap\{\te=0\}
\]
defined in the Introduction,
which one can naturally parametrize by means of the two remaining cylindrical coordinates $(r,z)$. We recall that, by hypothesis, $\BSi$ is contained in the region $\{r>0\}$. Given any vector field $B\in \tcB $, let $\phi^t$ denote its time~$t$ flow. Since $B^\te>0$ on $\BOm$, for each point~$x$ in the surface~$\BSi$ there is a smallest positive time $\tau(x)$ for which $\phi^{\tau(x)}x$ is again in~$\BSi$. As usual, the map $\Pi_B: \BSi\to \BSi$ given by
\[
\Pi_B(x):= \phi^{\tau(x)}x
\]
is called the {\em Poincar\'e map}\/ (or first return map) of the field~$B$ on the closed section~$\BSi$.

It is well known that $\Pi_B$ is not just any kind of map from~$\BSi$ onto itself. To make this precise, let us introduce some notation. Given an area measure~$\mu$ on~$\BSi$, let us denote by $\Diff_\mu^+(\BSi)$ the space of $C^\infty$ diffeomorphisms of~$\BSi$ that preserve the measure~$\mu$ and are isotopic to the identity (as usual, we endow $\Diff_\mu^+(\BSi)$ with the $C^\infty$ topology, which makes it a complete Baire metric space). Likewise, for each $B\in\tcB $, we consider the measure on~$\Si$ given by
\[
\mu_B:=  {B^\te(r,0,z)}\,r\, dr\,dz\,.
\]
We then have the following easy result. Although this is well known to experts, for completeness we provide a short proof of this fact in Section~\ref{S.cJ}.

\begin{proposition}\label{P.Pi}
	For each $B\in\tcB $, $\Pi_B\in \Diff^+_{\mu_B}(\BSi)$.
\end{proposition}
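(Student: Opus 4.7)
The plan is to verify the three defining properties of membership in $\Diff^+_{\mu_B}(\BSi)$: that $\Pi_B$ is a $C^\infty$ diffeomorphism of $\BSi$, that it preserves the measure $\mu_B$, and that it is isotopic to the identity. Throughout, the single geometric input is the strict transversality condition $B^\theta>0$ on the compact set $\BOm$, which forces the angular coordinate $\theta$ to increase strictly along every trajectory of $B$.

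For smoothness I would apply the implicit function theorem to the equation $\theta\circ\phi^t(x)=2\pi$: since $\partial_t(\theta\circ\phi^t(x))=B^\theta(\phi^t(x))>0$, this has a unique smooth solution $t=\tau(x)$ for each $x\in\BSi$, so $\Pi_B$ is $C^\infty$; the same construction run backward produces its smooth inverse, so $\Pi_B$ is in fact a diffeomorphism. For the isotopy I would reparametrize the flow by $\theta$ and rotate back: letting $\tau_s(x)$ be the first time at which the trajectory from $x$ meets $\{\theta=s\}$, and $R_{-s}$ the Euclidean rotation by angle $-s$ about the $x_3$-axis (which carries $\{\theta=s\}$ back to $\BSi$), the family
\[
F_s(x):=R_{-s}\bigl(\phi^{\tau_s(x)}(x)\bigr),\qquad s\in[0,2\pi],
\]
gives a smooth isotopy of diffeomorphisms of $\BSi$ with $F_0=\id$ and $F_{2\pi}=\Pi_B$.

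The substantive point is measure-preservation, which I would prove by a flux argument. Let $\omega=r\,dr\wedge d\theta\wedge dz$ be the Euclidean volume form and set $\beta:=\iota_B\omega$. The divergence-free condition $\Div B=0$ is equivalent to $\mathcal{L}_B\omega=0$, so Cartan's formula yields $d\beta=\mathcal{L}_B\omega-\iota_B\,d\omega=0$; moreover $\mathcal{L}_B\beta=0$ since $\iota_B\beta=0$. A direct computation in cylindrical coordinates, using that $d\theta$ pulls back to zero on $\Sigma$, gives $\beta|_\Sigma=-rB^\theta\,dr\wedge dz$, which represents the positive measure $\mu_B$ once $\Sigma$ is oriented via the conormal $\partial_\theta$ (the direction in which $B$ crosses it). Because $\beta$ is a closed and $B$-invariant $2$-form, the flow map between any two transverse angular sections pulls $\beta$ back to $\beta$; applied to the identification $\{\theta=0\}\to\{\theta=2\pi\}$ effected by $\Pi_B$ (composed with $R_{-2\pi}=\id$), this gives $\Pi_B^*\mu_B=\mu_B$.

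The main obstacle, such as it is, lies in tracking the sign and orientation conventions carefully in the last step so that the flow-invariant $2$-form $\beta|_\Sigma$ is identified with the positive measure $\mu_B$ rather than its negative; beyond this, the proof is a routine combination of the implicit function theorem with elementary Cartan calculus.
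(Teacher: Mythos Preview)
Your proof is correct and, in fact, more complete than the paper's, since the paper only addresses the measure-preservation part and leaves smoothness and the isotopy implicit. For the invariance of $\mu_B$, however, the two arguments genuinely differ. The paper rescales to $\tB:=B/B^\theta$, so that $\theta$ becomes the time variable and $\Pi_B=\tilde\phi^{2\pi}|_\Sigma$; since $\tB$ preserves the volume $B^\theta r\,dr\,d\theta\,dz$, a thickening-and-$\delta\to0$ comparison of the volumes of $V_\delta$ and $\tilde\phi^{2\pi}(V_\delta)$ gives $\mu_B(\Pi_B(V))=\mu_B(V)$ for every Borel $V$. Your route via the flux $2$-form $\beta=\iota_B\omega$ is cleaner and coordinate-free: once you know $(\phi^t)^*\beta=\beta$ and $\iota_B\beta=0$, the variable-time contributions $(d\tau\!\cdot\! v)\,B$ in $d\Pi_B$ are killed by $\iota_B$, and the pointwise identity $\Pi_B^*(\beta|_\Sigma)=\beta|_\Sigma$ follows. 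The paper's argument is more elementary (no Cartan calculus) but yours avoids the $\delta$-limit and explains directly why $\mu_B=B^\theta r\,dr\,dz$ is the natural invariant measure. One small comment: in your sentence ``because $\beta$ is closed and $B$-invariant'' it is really $\iota_B\beta=0$ (not closedness) that handles the variable return time, so you might want to point to that explicitly.
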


\subsection{Addressing genericity questions via Poincar\'e maps}

We shall divide the proof of Theorem~\ref{T.main} in three steps. The first step, which we present in this subsection, is to show that one can effectively address genericity questions about braided fields by means of their Poincar\'e maps. Our objective in this subsection is to make this statement precise. The proofs of the various lemmas that we will need for this purpose are given in Section~\ref{S.cJ}.

A first difficulty one must address is that,
by Proposition~\ref{P.Pi}, the Poincar\'e maps of different vector field $B\in\tcB$ preserve, in general, different area measures on~$\BSi$. However, thanks to Moser's isotopy lemma~\cite{Moser}, one can consider a conjugation of the Poincar\'e map which preserves a fixed measure on~$\BSi$, for instance the natural area measure
\[
\mu:=r\, dr\, dz
\]
induced from the Euclidean space.
We will informally refer to the resulting diffeomorphism, which we shall denote by $\cJ(B)$, as the {\em $\mu$-preserving Poincar\'e map}\/ of~$B$. To ensure that these diffeomorphisms are close whenever the associated fields are close, we will  show that we can define the $\mu$-preserving Poincar\'e map of a field by means of a continuous function $\tcB\to\diff$. The proof of this lemma, which uses a result of Dacorogna and Moser~\cite{DM}, is presented in Section~\ref{S.cJ}.

\begin{lemma}\label{L.mapJ}
There exists a continuous map $\cJ: \tcB \to \diff$ such that, for each~$B\in\tcB$, there is a $C^\infty$ diffeomorphism $Y_B$~of~$\BSi$ conjugating the diffeomorphism~$\cJ(B)$ and the Poincar\'e map of~$B$:
	\begin{equation}\label{E.cJYB}
	\cJ(B)= Y_B\circ \Pi_B\circ Y_B^{-1}\,.
	\end{equation}
\end{lemma}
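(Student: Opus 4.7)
The plan is to construct $Y_B$ by Moser's trick, in the refinement of Dacorogna and Moser that accommodates the boundary of $\BSi$, so as to carry the variable area form $\mu_B$ onto the fixed reference form $\mu$, and then simply define $\cJ(B):=Y_B\circ\Pi_B\circ Y_B^{-1}$. Continuity of the assignment $B\mapsto\cJ(B)$ will then follow from the continuous dependence on $B$ of each piece of the construction.

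The two area forms $\mu_B$ and $\mu$ generally have different total masses on $\BSi$, so I first rescale. The quantity $c_B := \int_{\BSi}\mu_B / \int_{\BSi}\mu$ is a continuous positive function of $B\in\tcB$, and the form $\tilde{\mu}_B := c_B^{-1}\mu_B$ has the same total mass as $\mu$ while remaining invariant under $\Pi_B$ (a map that preserves $\mu_B$ preserves every positive scalar multiple of $\mu_B$). The theorem of Dacorogna and Moser \cite{DM} then produces a $C^\infty$ diffeomorphism $Y_B:\BSi\to\BSi$, equal to the identity on $\pd\BSi$, with $Y_B^*\mu = \tilde{\mu}_B$. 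Their construction interpolates linearly between the densities of $\tilde{\mu}_B$ and $\mu$, solves the resulting divergence equation by an explicit right inverse of $\Div$ that vanishes on $\pd\BSi$, and takes $Y_B$ to be the time-one flow of the corresponding time-dependent vector field; each of these operations is continuous in the $C^\infty$ topology with respect to the input volume form, so the map $B\mapsto Y_B$ is continuous from $\tcB$ into $\Diff^\infty(\BSi)$.

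With $Y_B$ in hand I set $\cJ(B):= Y_B\circ\Pi_B\circ Y_B^{-1}$. Invariance of $\mu$ is immediate: from $(Y_B)_*\tilde{\mu}_B = \mu$ and $(\Pi_B)_*\tilde{\mu}_B = \tilde{\mu}_B$ one obtains $(\cJ(B))_*\mu = \mu$. For the isotopy condition, $\Pi_B$ is orientation-preserving (as the first-return map of a smooth flow to a transverse disk section), and since $\BSi$ is diffeomorphic to the closed disk $D^2$ and the group of orientation-preserving $C^\infty$ diffeomorphisms of $D^2$ is connected by Smale's theorem, $\Pi_B$ is isotopic to the identity, and so is its conjugate $\cJ(B)$. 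Thus $\cJ(B)\in\diff$. Continuity of $\cJ:\tcB\to\diff$ then follows from the continuity of $B\mapsto\Pi_B$ (a consequence of the standard smooth dependence of the flow of $B$ on $B$, together with smoothness of the return time $\tau$ guaranteed by the transversality $B^\te>0$), combined with the already-established continuity of $B\mapsto Y_B$ and with the continuity of composition and inversion in $\Diff^\infty(\BSi)$.

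The only delicate step is extracting the $C^\infty$ continuous dependence of $Y_B$ on $B$ from the Dacorogna--Moser construction: one must verify that the right inverse of the divergence operator employed in \cite{DM} is continuous in $C^\infty$ as a map between the relevant spaces of smooth functions vanishing on $\pd\BSi$, so that both the associated time-dependent vector field and its time-one flow inherit continuous dependence on the data. This is standard elliptic regularity bookkeeping, and once it is in place, the rest of the argument is a formal consequence of the conjugation identity.
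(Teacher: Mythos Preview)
Your proposal is correct and follows essentially the same approach as the paper: normalize the measures, invoke Dacorogna--Moser to obtain a diffeomorphism $Y_B$ with $Y_B^*\mu = c_B\mu_B$ that depends continuously on~$B$, and set $\cJ(B):=Y_B\circ\Pi_B\circ Y_B^{-1}$. The paper records the continuous dependence via explicit $C^k$ bounds on $Y_B$ in terms of $\|B^\theta\|_{C^k}$ and $\|1/B^\theta\|_{C^k}$ rather than by describing the flow-based mechanism you outline, and it leaves implicit the verification (which you supply via Smale's theorem) that $\cJ(B)$ lies in the identity component of $\diff$; otherwise the arguments coincide.
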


For our purposes, the key property of the map $\cJ$ that we shall also establish in Section~\ref{S.cJ} is the following. In its proof, a recent theorem of Treschev~\cite{Treschev} is crucial. Incidentally, we mention that Treschev's construction has also been key to the study of non-mixing and non-ergodicity properties of time-dependent 3D Euler flows presented in~\cite{KKPS}.

\begin{lemma}\label{L.residual}
The map $\cJ$ is onto and open. In particular, if $\cU$ is a residual subset of an open set $U\subset \diff$, then $\cJ^{-1}(\cU)$ is a residual subset of the open set~$\cJ^{-1}(U)\subset \tcB$.
\end{lemma}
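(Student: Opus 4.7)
The lemma contains three claims: that $\cJ$ is surjective, that $\cJ$ is open, and that $\cJ^{-1}(\cU)$ is residual in $\cJ^{-1}(U)$ whenever $\cU$ is residual in~$U$. The third is automatic from the first two combined with the continuity of~$\cJ$ provided by Lemma~\ref{L.mapJ}: if $\cU=\bigcap_n D_n$ with each $D_n$ open and dense in~$U$, then $\cJ^{-1}(D_n)$ is open in $\cJ^{-1}(U)$ by continuity, and dense there because any nonempty open set $V\subset \cJ^{-1}(U)$ has $\cJ(V)$ open in~$U$ (by openness of~$\cJ$) and thus meets~$D_n$; intersecting over~$n$ concludes. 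So the actual work is to establish surjectivity and openness.

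For surjectivity I would use a suspension construction. Given $f\in\diff$, Treschev's recent realization theorem allows one to exhibit $f$ as the time-$2\pi$ map of a smooth, time-periodic, area-preserving flow on $\BSi$ tangent to $\pd\Sigma$. This produces a smooth isotopy $\{f_\theta\}_{\theta\in\SS^1}$ in $\diff$ from $f_0=\id$ to $f_{2\pi}=f$ whose infinitesimal generator $Z_\theta := (d f_\theta/d\theta)\circ f_\theta^{-1}$ is a $2\pi$-periodic family of smooth $\mu$-preserving vector fields on $\BSi$ tangent to~$\pd\Sigma$. Define
\[
B := Z_\theta^r(r,z)\,\pd_r + Z_\theta^z(r,z)\,\pd_z + \pd_\theta.
\]
Since $B^\theta\equiv1$, the divergence of~$B$ in cylindrical coordinates reduces to $\frac{1}{r}\pd_r(r Z_\theta^r)+\pd_z Z_\theta^z$, which vanishes precisely because each $Z_\theta$ preserves the area form $\mu=r\,dr\,dz$. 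Tangency to $\pd\Om$ is inherited from tangency of $Z_\theta$ to $\pd\Sigma$, and $B^\theta\equiv 1>0$, so $B\in\tcB$. By construction $\Pi_B=f_{2\pi}=f$ and $\mu_B=\mu$, so the Moser conjugation $Y_B$ in Lemma~\ref{L.mapJ} may be taken to be the identity and $\cJ(B)=f$.

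For openness I would construct a continuous local section through every point: for each $B_0\in\tcB$ with $\cJ(B_0)=f_0$, a continuous map $\cS:V\to\tcB$ on a neighborhood $V$ of~$f_0$ with $\cS(f_0)=B_0$ and $\cJ\circ\cS=\id_V$. Given such sections, openness follows immediately, since for open $U\subset\tcB$ and $y=\cJ(B)\in\cJ(U)$ with $B\in U$, the local section at~$(B,y)$ yields an open neighborhood $\cS^{-1}(U)\subset \cJ(U)$ of~$y$. To build the section near $f_0$, write the perturbation as $g:=f\circ f_0^{-1}$, close to the identity in~$\diff$. A parametric refinement of Treschev's theorem furnishes a continuously $g$-dependent isotopy $\{h_\theta(g)\}_{\theta\in[0,1]}$ inside $\diff$ from $\id$ to~$g$, flat near the endpoints. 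Pick flow-box coordinates for $B_0$ around a small disk $D\subset\BSi$ crossed by a single orbit segment of~$B_0$, cut off the generator of $\{h_\theta(g)\}$ to a thin $\theta$-slab supported in this flow box, and add it to~$B_0$. This produces $\tilde B\in\tcB$ close to $B_0$ in $C^\infty$ whose Poincar\'e map is conjugate, via a $\mu$-preserving diffeomorphism close to the identity, to $g\circ \Pi_{B_0}$; tracking the Moser conjugation of Lemma~\ref{L.mapJ} then gives $\cJ(\tilde B)=f$, so $\cS(f):=\tilde B$ is the required section.

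The main obstacle is precisely the parametric step: obtaining a family of isotopies $\{h_\theta(g)\}$ that depends continuously on~$g$ in the $C^\infty$-topology while staying entirely in the area-preserving category. Naive candidates such as $g_\theta:=g^\theta$ or Moser's original trick applied pointwise either fall outside $\diff$ or lack the required parametric continuity. Treschev's realization theorem, and specifically the smoothness of its Hamiltonian construction in the given diffeomorphism, supplies exactly the continuous parametric dependence needed; once this is in hand, the remaining cut-and-paste in flow-box coordinates is a routine perturbative argument.
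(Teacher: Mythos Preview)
Your surjectivity argument and the reduction of the residual claim to openness match the paper. For openness, however, the paper does not build local sections by hand: it argues by contradiction, conjugates the offending sequence $F_n\to F_0$ by $Y_{B_0}$ to obtain $\Pi_n\to\Pi_{B_0}$ in $\Diff^+_{\mu_{B_0}}(\BSi)$, and then invokes the continuity of Treschev's suspension map (the Corollary in~\cite{Treschev}) to produce braided fields $B_n\to B_0$ with $\Pi_{B_n}=\Pi_n$ and $\mu_{B_n}=\mu_{B_0}$. Since the last condition forces $Y_{B_n}=Y_{B_0}$, one reads off $\cJ(B_n)=F_n$ directly. All the cut-and-paste you propose is absorbed into that single citation.

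Your plug construction has a genuine gap: you localize to a flow box over a \emph{small} disk $D\subsetneq\BSi$, but the perturbation $g=f\circ f_0^{-1}$ moves all of $\BSi$, and its infinitesimal generator is not supported in~$D$. Cutting it off spatially to~$D$ changes the time-one map, so the resulting Poincar\'e map differs from $\Pi_{B_0}$ only on orbits through~$D$ and cannot realize an arbitrary~$g$. Because $B_0$ is braided and every constant-$\theta$ slice is a global section, the correct receptacle for the plug is a full $\theta$-slab $\{\theta\in[\theta_0,\theta_0+\epsilon]\}$ in coordinates straightening~$B_0$; no spatial cutoff is permitted. Even after this fix, two bookkeeping points remain that you gloss over: the isotopy must lie in $\Diff^+_{\mu_{B_0}}(\BSi)$ (so apply parametric Treschev to $Y_{B_0}^{-1}\circ g\circ Y_{B_0}$, not to~$g$) in order for the perturbed field to stay divergence-free with $\mu_{\tilde B}=\mu_{B_0}$; and once $\mu_{\tilde B}=\mu_{B_0}$ the Dacorogna--Moser construction in Lemma~\ref{L.mapJ} gives $Y_{\tilde B}=Y_{B_0}$, whence $\cJ(\tilde B)=f$ exactly rather than merely up to a conjugation close to the identity.
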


\subsection{Generic braided fields can only relax to a Beltrami field}\label{SS.NI}

The second step of the proof is to show that, if the Poincar\'e map of a field $B\in\tcB$ is non-integrable, in a precise sense which we will show to hold true generically, then the only MHS equilibria that~$B$ can be topologically equivalent to are Beltrami fields\footnote{We recall that a vector field $B$ is Beltrami if it satisfies the equation $\curl B=\lambda B$ in $\Om$ for some constant $\la\neq 0$.}. Moreover, the proportionality factors of the Beltrami fields are not arbitrary, as they can only range over a certain countable set of eigenvalues. This should be understood as a rigidity result for MHS equilibria on axisymmetric toroidal domains that holds under the topological assumptions that the field is braided and its Poincar\'e map exhibits certain generic complex behavior\footnote{Without some hypotheses, the rigidity result is obviously not true, as not all MHS equilibria are Beltrami fields.}.

In the following theorem, which we will prove in Section~\ref{S.non-integrable}, we show that there is a residual set $\cUni$ of area-preserving diffeomorphisms of~$\BSi$ such that each map of $\cUni$ is non-integrable and Morse--Smale on the boundary. This will be a key ingredient in the proof of the generic rigidity result that lies at the heart of our main theorem.

\begin{theorem}\label{T.cU}
	Let~$\cUni $ be the set of maps $F\in \diff$ such that:
	\begin{enumerate}
		\item The restriction $F|_{\pd\Si}$ is a Morse--Smale diffeomorphism of~$\pd\Si$.
		\item $F$ is non-integrable. That is, there does not exist a nonconstant function $h\in C^\infty(\BSi)$ such that $h\circ F=h$.
	\end{enumerate}
Then $\cUni $ is residual in $\diff$.
\end{theorem}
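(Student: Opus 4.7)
The plan is to realize $\cUni$ as the intersection of two residual subsets of $\diff$ corresponding to conditions (i) and (ii) separately; since $\diff$ is a Baire space, the intersection is then residual.

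For condition (i), let $\MS \subset \diff$ denote the set for which $F|_{\pd\Si}$ is Morse--Smale. Openness of $\MS$ follows from the structural stability of hyperbolic periodic orbits on $\SS^1$ and the local constancy of the number of such orbits under $C^1$-small perturbations. For density, given $F\in\diff$ I would construct a small Hamiltonian perturbation supported in a thin collar neighborhood of $\pd\Si$: working in Darboux coordinates $(s,t)$ with $\pd\Si=\{t=0\}$ and area form $ds\wedge dt$, the Hamiltonian $H(s,t)=t\,g(s)\,\chi(t)$ with $\chi$ a cutoff equal to $1$ near $t=0$ generates an area-preserving time-one map whose restriction to $\pd\Si$ is the time-one flow of $g(s)\,\pd_s$ on $\SS^1$. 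Since this correspondence is a local diffeomorphism between $\mathrm{Vect}(\SS^1)$ and $\mathrm{Diff}^+(\SS^1)$ near the identity, an appropriate choice of $g$ composes with $F$ to yield any prescribed small $C^\infty$ perturbation of $F|_{\pd\Si}$, and density of $\MS$ reduces to the classical density of Morse--Smale diffeomorphisms in $\mathrm{Diff}^+(\SS^1)$.

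For condition (ii), the plan is to express the set of non-integrable diffeomorphisms as a countable intersection of open dense subsets encoded by classical generic dynamics on symplectic surfaces: on the one hand, a Kupka--Smale-type theorem (in the form of Robinson--Takens) residually ensuring that all periodic points of $F$ are nondegenerate (hyperbolic or elliptic) and that invariant manifolds of distinct hyperbolic periodic points meet only transversally; on the other, a $C^\infty$ symplectic closing lemma (in the spirit of Asaoka--Irie) residually ensuring density of periodic points. Under these two properties, if $h\in C^\infty(\BSi)$ satisfies $h\circ F=h$, then $h$ is constant along orbits, and transverse heteroclinic connections between hyperbolic periodic points propagate a common value via $h(p)=\lim_{n\to-\infty}h(F^n(x))=h(x)=\lim_{n\to\infty}h(F^n(x))=h(q)$ for $x\in W^u(p)\cap W^s(q)$. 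Density of the hyperbolic skeleton then forces $h$ to be constant by continuity.

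The main obstacle is the coexistence of hyperbolic periodic orbits with elliptic ones surrounded by KAM islands, where the hyperbolic skeleton may fail to be dense and the propagation argument stalls. My plan is to impose further open-dense Diophantine conditions on the rotation numbers of the elliptic periodic points, so that Moser's invariant curve theorem and the Birkhoff normal form at each elliptic point, combined with the heteroclinic web from the surrounding hyperbolic skeleton, force $h$ to be constant on the elliptic zones as well. The technical implementation of these steps is the content of Section~\ref{S.non-integrable}.
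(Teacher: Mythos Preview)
Your overall architecture---realize $\cUni$ as the intersection of $\MS$ with the residual sets of diffeomorphisms having only nondegenerate periodic points (Robinson) and having dense periodic points (a $C^\infty$ closing lemma; here the relevant reference is Pirnapasov--Prasad, since $\BSi$ has boundary)---matches the paper exactly. The treatment of condition~(i) is also essentially the same: the paper packages your collar construction into the statement that the boundary-trace map $\diff\to\Diff^+(\pd\Si)$ is onto, open and continuous (Tsuboi), and then pulls back the open-dense Morse--Smale set from the circle.

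The genuine gap is in your argument that nondegeneracy plus density of periodic points forces non-integrability. Heteroclinic propagation $h(p)=h(x)=h(q)$ requires both that hyperbolic periodic points are dense and that any two of them are linked by a chain of heteroclinic orbits; neither is delivered by Kupka--Smale transversality together with density of periodic points. Your proposed fix for the elliptic islands (Diophantine rotation numbers, Birkhoff normal form) moves in the wrong direction: Diophantine elliptic points are precisely those for which KAM invariant circles persist, and those circles are topological barriers that the exterior heteroclinic web cannot cross. Nothing in your outline forces $h$ to take the same value on both sides of such a circle, nor connects the hyperbolic points trapped inside nested KAM annuli to those outside.

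The paper sidesteps this by arguing from the \emph{integrable} side rather than the generic side. If $F$ admits a nonconstant invariant $h$, the Morse--Smale boundary condition forces $h|_{\pd\Si}$ to be constant, so a regular level of $h$ contains an embedded invariant circle $\gamma\subset\Si$. On an annular neighborhood of $\gamma$ one has action-angle coordinates $(R,\vp)$ in which $F(R,\vp)=(R,\vp+\om_F(R))$ and $\mu=dR\,d\vp$. The contradiction is then immediate and local: if $\om_F$ is nonconstant, or constant and a rational multiple of $2\pi$, there are whole circles of periodic points, contradicting nondegeneracy; if $\om_F$ is a constant irrational multiple of $2\pi$, the annulus contains no periodic points at all, contradicting density. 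This single-annulus argument replaces your global connectivity hypothesis and closes the proof without ever needing to analyze the heteroclinic web or the interior of elliptic islands.
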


\begin{remark}\label{R.cJPi}
The non-integrability condition is invariant under conjugation by a diffeomorphism of~$\BSi$. Indeed, by Equation~\eqref{E.cJYB}, $h\in C^\infty(\BSi)$ is invariant under~$\cJ(B)$  if and only if $h\circ Y_B\in C^\infty(\BSi)$ is invariant under $\Pi_B=Y_B^{-1}\circ \cJ(B)\circ Y_B$. Of course, the Morse-Smale property at the boundary is also invariant under conjugation.
\end{remark}

\begin{remark}
In the plasma physics literature, the property that is usually considered to conclude that a magnetic field is non-integrable is the ergodicity of the field~\cite{Pontin}, which implies the existence of a full measure set of magnetic lines that are dense on the domain. However, since this property is {\em not}\/ generic in the $C^\infty$ topology due to Moser's twist theorem, we cannot effectively use it in this context. Instead, the proof of Theorem~\ref{T.cU} is based on a novel different dynamical obstruction, which arises when the periodic orbits are all nondegenerate (i.e., hyperbolic or elliptic) and their union is dense in $\Om$. We shall see in Section~\ref{S.non-integrable} that this property is indeed generic in the smooth topology.
\end{remark}

To state the generic rigidity result, we need to introduce some notation. By Hodge theory, it is well known that the space of harmonic fields on~$\Om$ that are tangent to the boundary is one-dimensional (see e.g.~\cite{Acta}). We henceforth fix a nonzero harmonic field
\begin{equation}\label{E.defhOm}
h_\Om:=\frac{1}{r^2}\, \partial_\te\,,
\end{equation}
which satisfies
\begin{gather*}
\Div h_\Om=0\quad \text{and}\quad \curl h_\Om=0\qquad \text{in }\Om\,,\\
h_\Om\cdot N=0 	\quad \text{on }\pd \Om\,.
\end{gather*}

For any constant~$\la$, it is well known (see e.g.~\cite[Proposition 6.1]{Acta}) that the boundary value problem
\begin{gather}\label{E.eigen}
\curl v=\la v\quad \text{in }\Om\,,\qquad v\cdot N=0\quad \text{on } \pd \Om\,,\qquad \int_{\Om} v\cdot h_\Om\, dx=0
\end{gather}
admits nontrivial solutions if and only if $\la$ belongs to a countable subset of the real line without accumulation points, which we shall henceforth denote by $\{\la_j\}_{j=1}^\infty\subset\RR\backslash\{0\}$. For short, we will refer to these constants (which are both positive and negative) as the {\em eigenvalues}\/ of $\curl$ on~$\Om$.

In what follows, we denote by $d_j$ the {\em multiplicity}\/ of the eigenvalue~$\la_j$, that is, the dimension of the vector space of solutions to the problem~\eqref{E.eigen} with $\la=\la_j$ (which is necessarily finite). We will also fix a basis for the corresponding eigenspace, $\{v_{j,1},\dots, v_{j,d_j}\}$. Note that, while these are smooth divergence-free vector fields tangent to the boundary, they are not necessarily braided. Finally, we let~$V_j$ be the unique solution (see~\cite[Theorem~1]{Boul}) to the problem
\begin{gather*}
\curl V_j=\la_j V_j\quad \text{in }\Om\,,\qquad V_j\cdot N=0\quad \text{on } \pd \Om\,,\\
 \int_{\Om} V_j\cdot v_{j,k}\, dx=0\qquad \text{for all } k\in\{1,\cdots, d_j\}\,,\\
 \int_{\Om} V_j\cdot h_\Om\,dx=1\,.
\end{gather*}

The rigidity result we need, which we will prove in Section~\ref{S.rigidity}, can then be stated as follows:

\begin{theorem}\label{T.rigidity}
Assume that $B\in \cJ^{-1}(\cUni )$. If there exists a $C^\infty$ volume-preserving diffeomorphism~$Y$ of~$\BOm$ such that $\lB:=Y_*B$ is an MHS equilibrium (that is, $\lB$ satisfies Equation~\eqref{E.statEuler} for some pressure function $P\in C^\infty(\BOm)$), then
	\begin{equation}\label{E.formu}
			\lB= c_0V_j+\sum_{k=1}^{d_j}c_k \, v_{j,k}=:u^j_c
	\end{equation}
	for some integer $j\geq1$ and some real constants $c:= (c_0,c_1,\dots, c_{d_j})$.
\end{theorem}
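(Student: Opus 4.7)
The plan is to take the MHS equilibrium $\lB=Y_*B$ and, by progressively constraining its structure, rule out every possibility except the linear combination in~\eqref{E.formu}. At each step I will invoke non-integrability of $\Pi_B$, which follows from $\cJ(B)\in\cUni$ via the conjugation $\cJ(B)=Y_B\circ\Pi_B\circ Y_B^{-1}$ of Lemma~\ref{L.mapJ} together with Remark~\ref{R.cJPi}. The engine driving every reduction is a simple \emph{transfer principle}: whenever $F\in C^\infty(\BOm)$ is nonconstant and satisfies $\lB\cdot\nabla F=0$, the pullback $F\circ Y$ is a smooth first integral of~$B$; because $B^\te>0$ on $\BOm$ forces every $B$-orbit to meet the disk-like section $\BSi$ in forward time, $F\circ Y$ is still nonconstant on $\BSi$ and its restriction is a smooth $\Pi_B$-invariant function, contradicting non-integrability.

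Applied first to the pressure, which satisfies $\lB\cdot\nabla P\equiv0$ from $\lB\times\curl\lB=\nabla P$ and $\lB\cdot N=0$, the principle forces $P$ to be constant, and the MHS system reduces to $\curl\lB\parallel\lB$ pointwise. Since $B$ is braided (hence nowhere zero), $\lB=Y_*B$ is nowhere zero as well, so one may write $\curl\lB=f\lB$ for a well-defined $f\in C^\infty(\BOm)$. Taking divergence gives $\lB\cdot\nabla f=0$, and a second application of the transfer principle forces $f$ to equal a constant~$\la\in\RR$. Thus $\lB$ is a Beltrami field tangent to $\pd\Om$.

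The next task is to prove that $\la$ must belong to the spectrum $\{\la_j\}$. If $\la=0$, then $\lB$ is a harmonic field tangent to $\pd\Om$; by one-dimensionality of such fields on~$\Om$, $\lB=c\,h_\Om$ with $c\neq0$, whose orbits are closed circles of constant $(r,z)$. Conjugating back through $Y$, all $B$-orbits are closed, and by continuity the number of crossings with the connected section $\BSi$ is a constant integer $k$, so $\Pi_B^k=\mathrm{id}$; averaging a nonconstant smooth function over the resulting finite group action produces a nonconstant $\Pi_B$-invariant smooth function, a contradiction. If $\la\neq0$ but $\la\notin\{\la_j\}$, I split $\lB=\lB^{\mathrm{ax}}+\lB^{\mathrm{non}}$ into its axisymmetric part and zero-$\te$-average complement. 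Both pieces solve $\curl=\la$ and remain tangent to $\pd\Om$, and since $h_\Om=\frac{1}{r^2}\pd_\te$ is itself axisymmetric we have $\int\lB^{\mathrm{non}}\cdot h_\Om\,dx=0$; the defining property of $\{\la_j\}$ in~\eqref{E.eigen} then forces $\lB^{\mathrm{non}}\equiv0$, so $\lB$ is axisymmetric and admits a smooth stream function $\psi(r,z)$. A short cylindrical computation shows that if $\psi$ were constant, $\lB$ would reduce to a field of the form $U^\te(r)\,\pd_\te$ and the Beltrami equation would imply $U^\te\equiv0$, contradicting $\lB\neq0$. Hence $\psi$ is nonconstant, and the transfer principle closes this case.

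Finally, with $\la=\la_j$ for some $j\geq1$, set $c_0:=\int_\Om\lB\cdot h_\Om\,dx$. Then $\lB-c_0V_j$ satisfies $\curl v=\la_j v$, $v\cdot N=0$ on $\pd\Om$, and $\int v\cdot h_\Om\,dx=0$ by the normalization $\int V_j\cdot h_\Om\,dx=1$; by the characterization of the spectrum in~\eqref{E.eigen} it lies in the $\la_j$-eigenspace $\Span\{v_{j,1},\dots,v_{j,d_j}\}$, yielding the decomposition~\eqref{E.formu}. I expect the non-eigenvalue step to be the main obstacle, as it requires an accurate match between the symmetry-averaging of $\lB$ and the definition of $\{\la_j\}$ via the orthogonality $\int v\cdot h_\Om\,dx=0$, together with the coordinate verification that no nontrivial axisymmetric Beltrami field with $\la\neq0$ can have a constant stream function.
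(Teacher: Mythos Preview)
Your proposal is correct and follows the same architecture as the paper's proof (Lemmas~\ref{L.fi}, \ref{L.existela}, \ref{L.laeigen} and the final decomposition): kill first integrals via non-integrability, reduce to a Beltrami field with constant~$\la$, force $\la\in\{\la_j\}$ through an axisymmetry argument, then decompose using~$V_j$ and the eigenbasis. The only noteworthy differences are cosmetic, in the eigenvalue step. Where the paper differentiates in~$\te$ and shows $w:=\pd_\te\lB$ solves~\eqref{E.eigen}, you average in~$\te$ and show the zero-mean part $\lB^{\mathrm{non}}$ solves it; both routes yield $\lB$ axisymmetric when $\la\notin\{\la_j\}$ by exactly the same mechanism. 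For the degenerate case $\psi=\mathrm{const}$, the paper observes that the Poincar\'e map then reduces to the identity (contradicting non-integrability), whereas you read off from the third scalar component of $\curl\lB=\la\lB$ that $\lB^\te\equiv0$ when $\la\neq0$; this is slightly more direct but forces your separate treatment of $\la=0$ via the finite-order averaging argument, which the paper handles uniformly within the axisymmetric case.
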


\begin{remark}\label{R.relax}
This is a sufficient, and in fact generic, topological condition on the magnetic field (namely, being braided and having non-integrable Poincar\'e map that is Morse--Smale on the boundary) which ensures that the only MHS equilibria that the field can topologically relax to are Beltrami fields.
\end{remark}

\subsection{Fast growth for the number of periodic points and conclusion of the proof}

To close the argument, we need to use the rigidity result proven in Theorem~\ref{T.rigidity} to show that a (locally) generic braided field cannot be topologically equivalent to an MHS equilibrium. Intuitively speaking, this is easy to believe because we have shown that, generically, the only MHS equilibria that can arise are the Beltrami fields~$u^j_c$, which is a fairly small class of analytic vector fields. However, since there is no a priori control on the volume-preserving $C^\infty$~diffeomorphisms of the conjugation, proving that this is indeed the case is rather tricky. As we will see, our proof ultimately relies on the fact that typical area-preserving diffeomorphisms of~$\BSi$ exhibit an extremely fast growth of the number of (hyperbolic and elliptic) periodic points, in a sense that will eventually clash with the fact that all the complexity of the Poincar\'e map of the field must be ultimately encoded in the parameters~$j,c$ that describe the vector field~$u^j_c$.

To make precise this argument we start by noting that a Beltrami field~$u^j_c$ of the form~\eqref{E.formu} does not generally admit a global disk-like transverse section. However, if a field admits such a section, the same holds for any other field proportional to it, and furthermore both fields give rise to the same return map on the section. To factor in this, for each $j\geq1$ we consider the set of parameters $c=(c_0,c_1,\dots, c_{d_j})$, normalized to have unit length, for which the corresponding vector field $u^j_c$ exhibits an embedded (compact) global transverse section $\Si^j_c\cong \overline{\mathbb D}$ with $\partial\Si^j_c\subset\partial\Om$:
\[
\cC_j:=\{ c\in \SS^{d_j}: u^j_c \text{ admits a global transverse section } \Sigma^j_c\subset\overline\Om \text{ diffeomorphic to } \overline{\bD}\}\,.
\]
Here $\SS^{d_j}:=\{c\in\RR^{d_j+1}: |c|^2=1\}$. The set $\cC_j$ is obviously open and by the density of analytic functions there is no loss of generality in assuming that the section $\Sigma^j_c$ is an analytic submanifold.

Since any open subset of a manifold is $\sigma$-compact, we can fix a countable cover of~$\cC_j$ by compact sets, which we henceforth denote by $\{K_{j,k}\}_{k=1}^\infty$ with $K_{j,k}\subset\cC_j$. Without any loss of generality, we can assume that these compact sets are small enough so that that there is an analytic disk-like submanifold $\Si_{j,k}\cong \overline{\bD}$ that is a global transverse section of~$u^j_c$ for all $c\in K_{j,k}$. Let
$$
\Pi_{j,k,c}:\Si_{j,k}\to \Si_{j,k}
$$
be the first return map of the vector field $u^j_c$ with $c\in K_{j,k}$. This map is a diffeomorphism of~$\Si_{j,k}$ that preserves an area measure $\mu_{u^j_c}$ by the proof of Proposition~\ref{P.Pi}, the explicit expression of which will not be relevant for our purposes. A straightforward property that we will crucially use in the proof is that the number of hyperbolic periodic points with fixed period of $\Pi_{j,k,c}$ is uniformly bounded for all $c\in K_{j,k}$. More precisely, for $c\in K_{j,k} $ let us denote by
\[
\text{Per}(\Pi_{j,k,c},n):=\{x\in\Sigma^j_k: \Pi_{j,k,c}^n(x)=x \, \text{ and } \, \Pi_{j,k,c}^m(x)\neq x\, \text{ for } 0<m<n\}
\]
the set of periodic points of the diffeomorphism~$\Pi_{j,k,c}$ whose least period is~$n$, and by $\Per(\Pi_{j,k,c},n)\subset \text{Per}(\Pi_{j,k,c},n)$ the set of hyperbolic periodic points of minimal period $n$. Recall that $x$ is a hyperbolic $n$-periodic point of a diffeomorphism $F$ if none the eigenvalues of the derivative~$\nabla F^n(x)$ has unit modulus. Then we have the following uniform bound:

\begin{lemma}\label{L.N}
	For any $j,k$,
	\[
N_{j,k}(n):= \sup\left\{ \#\Per(\Pi_{j,k,c},n): c\in K_{j,k}\right\}<\infty\,.
\]
\end{lemma}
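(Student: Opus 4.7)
The plan is to combine the joint real-analyticity of the family $\{\Pi_{j,k,c}\}_{c\in K_{j,k}}$ in $(x,c)$ with a standard finiteness result from subanalytic geometry. The vector field $u^j_c=c_0 V_j+\sum_k c_k v_{j,k}$ depends linearly, hence analytically, on $c$, and each $V_j$ and $v_{j,k}$ is real-analytic on $\overline\Omega$ because $\Omega$ has analytic boundary and these fields solve the elliptic Beltrami equation $\curl v=\lambda_j v$. Consequently $u^j_c$ is jointly real-analytic in $(x,c)$, and since $\Sigma_{j,k}$ is an analytic global transverse section of $u^j_c$ for every $c\in K_{j,k}$, real-analytic dependence of the ODE flow on initial data and parameters implies that the first-return map and all of its iterates $(x,c)\mapsto \Pi_{j,k,c}^n(x)$ are jointly real-analytic on $\Sigma_{j,k}\times K_{j,k}$. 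By refining the cover if necessary, I may assume that each $K_{j,k}$ is a compact subanalytic subset of $\mathbb{S}^{d_j}$ (for instance a closed coordinate ball), still contained in the open set of parameters for which $\Sigma_{j,k}$ remains a global section.

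The next step is, for fixed $n\geq 1$, to study the compact real-analytic subset
$$
V_n:=\{(x,c)\in \Sigma_{j,k}\times K_{j,k}\ :\ \Pi_{j,k,c}^n(x)=x\}
$$
of $\Sigma_{j,k}\times K_{j,k}$, together with the proper projection $\pi:V_n\to K_{j,k}$. By Hardt's trivialization theorem for proper subanalytic maps (equivalently, the {\L}ojasiewicz--Hironaka triangulation of compact semi-analytic sets), there is a finite subanalytic stratification of $K_{j,k}$ over each stratum of which $\pi$ is a locally trivial topological fibration. In particular, the fibers $V_{n,c}:=V_n\cap\pi^{-1}(c)$ fall into only finitely many homeomorphism types as $c$ varies in $K_{j,k}$, so the number of connected components of $V_{n,c}$ is bounded above by a constant $N_{j,k}(n)$ depending only on the indices $j,k,n$.

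To close the argument, I observe that every hyperbolic periodic point of least period $n$ of $\Pi_{j,k,c}$ is an isolated point of $V_{n,c}$: hyperbolicity at $x$ means that no eigenvalue of $D\Pi_{j,k,c}^n(x)$ lies on the unit circle, so in particular $1$ is not an eigenvalue, whence $\Pi_{j,k,c}^n-\mathrm{id}$ has invertible derivative at $x$ and the inverse function theorem furnishes a neighborhood of $x$ in $\Sigma_{j,k}$ in which $x$ is the unique fixed point. Hence $\{x\}$ is a singleton connected component of $V_{n,c}$, which yields
$$
\#\Per(\Pi_{j,k,c},n)\ \leq\ \#\{\text{connected components of }V_{n,c}\}\ \leq\ N_{j,k}(n),
$$
the desired uniform bound.

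The main obstacle is not the finiteness of $\#\Per(\Pi_{j,k,c},n)$ for fixed $c$, which is immediate from real-analyticity and compactness, but rather the uniformity of this bound in $c$. Lower semi-continuity of the count is easy via the implicit function theorem; however, upper semi-continuity can in principle fail, since a degenerate fixed point of $\Pi_{j,k,c_0}^n$ may split into several hyperbolic ones under small perturbations of $c$ through saddle-node or period-doubling bifurcations. It is precisely the tameness of the real-analytic/subanalytic category over a compact base, embodied in Hardt's theorem, that rules out arbitrarily long cascades of such bifurcations and produces the uniform constant $N_{j,k}(n)$.
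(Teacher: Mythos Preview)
Your proof is correct and rests on the same two pillars as the paper's: real-analyticity of the Beltrami fields $u^j_c$ (and hence of the return map) up to the boundary, and compactness of the parameter set $K_{j,k}$. The paper's argument is considerably terser: it establishes analyticity of $\Pi_{j,k,c}$ for each fixed $c$, cites Bruhat--Whitney to conclude that the isolated fixed points of $\Pi_{j,k,c}^n$ are finite in number, and then simply asserts that the supremum over $c\in K_{j,k}$ is finite ``by compactness''. You instead work with the joint analyticity in $(x,c)$, form the global fixed-point variety $V_n\subset \Sigma_{j,k}\times K_{j,k}$, and invoke Hardt's trivialization theorem to control the fibers uniformly. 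This is more explicit on exactly the point the paper glosses over---you correctly identify that upper semi-continuity of the hyperbolic-point count in $c$ is the real issue, since bifurcations can increase the count under perturbation---and your use of Hardt's theorem is a clean way to rule out unbounded cascades. The trade-off is that you import a heavier piece of subanalytic geometry where the paper is content with a one-line appeal to compactness and the Bruhat--Whitney finiteness result; in substance, though, both arguments exploit the same tameness of the real-analytic category over a compact base.
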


\begin{proof}
As the vector field~$u^j_c$ is a Beltrami field with eigenvalue $\la_j$ on the toroidal domain $\Om$, cf.~Equation~\eqref{E.formu}, and the boundary~$\partial\Om$ is analytic by assumption, it follows that $u^j_c$ is analytic up to the boundary by~\cite[Theorem A.1]{JEMS}. The flow defined by $u^j_c$ is then analytic, and since we chose the disk-like global transverse section $\Sigma_{j,k}$ to be a compact analytic  submanifold with boundary, it easily follows that the Poincar\'e map $\Pi_{j,k,c}$ is an analytic diffeomorphism. Since $\Per(\Pi_{j,k,c},n)$ is a subset of the set of isolated points of $\text{Per}(\Pi_{j,k,c},n)$, and the latter set is finite by the analyticity of $\Pi_{j,k,c}^n$ on $\Sigma^j_k$ for any fixed $n$, $j$, $k$ and~$c\in\cC_j$, cf.~\cite[Proposition~11]{Bruhat}, the finiteness of $N_{j,k}(n)$ then follows from the compactness of~$K_{j,k}$. The lemma is proven.
\end{proof}

The reason for which the uniform bounds for $N_{j,k}(n)$ are useful in our argument is that they are going to clash with the fact that (locally) generic area-preserving diffeomorphisms of the disk exhibit an arbitrarily fast growth of the number of periodic points, as made precise in the following theorem due to Asaoka:

\begin{theorem}[Asaoka~\cite{Asaoka}]\label{T.Asaoka}
For any $j,k$, there is a subset $\cU_{j,k}\subset \diff$ such that each $F\in \cU_{j,k}$ satisfies
	\[
	\limsup_{n\to\infty}\frac{\#\Per(F,n)}{N_{j,k}(n)}=\infty\,.
	\]
This subset is residual (and therefore dense) in an open subset $\cN_\Sigma$ of $\diff$ that does not depend on $j,k$. Moreover, a dense set of diffeomorphisms in $\cU_{j,k}$ can be taken to be analytic.
\end{theorem}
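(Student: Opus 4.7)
The plan is to identify $\cN_\Sigma$ with (a suitable open subset of) the Newhouse domain for area-preserving disk diffeomorphisms recalled in Appendix~\ref{A.Newhouse}, and then to prove the statement by a Baire category argument built on Duarte's conservative version of the Newhouse phenomenon. Concretely, for fixed $(j,k)$ and every $L,n_0\in\NN$, set
\[
\cV_{j,k}(L,n_0):=\bigl\{F\in\diff:\exists\, n\geq n_0,\ \#\Per(F,n)>L\cdot N_{j,k}(n)\bigr\}.
\]
Each $\cV_{j,k}(L,n_0)$ is open in~$\diff$, because hyperbolic periodic points of a given period persist under $C^\infty$-small perturbations. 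Granted density of $\cV_{j,k}(L,n_0)$ in $\cN_\Sigma$, the intersection
\[
\cU_{j,k}:=\bigcap_{L,n_0\in\NN}\cV_{j,k}(L,n_0)
\]
is residual in $\cN_\Sigma$ by Baire, and by construction every $F\in\cU_{j,k}$ satisfies $\limsup_{n}\#\Per(F,n)/N_{j,k}(n)=\infty$.

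The heart of the argument is therefore to prove that $\cV_{j,k}(L,n_0)$ is dense in $\cN_\Sigma$. I would proceed as follows. Fix $F_0\in\cN_\Sigma$ and $\vep>0$. Duarte's theorem guarantees that, after an $\vep/3$-perturbation, we may assume $F_0$ has a hyperbolic periodic saddle whose invariant manifolds exhibit a homoclinic tangency which is robust in $\cN_\Sigma$. The unfolding of such a tangency in the area-preserving setting (Mora--Romero, Duarte) produces, generically, elliptic periodic points of arbitrarily large period near the tangency; pick such an elliptic periodic point $p$ of period $m$, with $m$ as large as we wish. In Moser normal-form coordinates around $p$, the $m$-th iterate $(F_0)^m$ takes the form of a twist map plus a small perturbation. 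The key technical ingredient is then an area-preserving analogue of Kaloshin's renormalization trick: one can implement a further $\vep/3$-perturbation, supported in a tiny neighborhood of $p$, which realizes inside this neighborhood any prescribed area-preserving disk diffeomorphism (of the renormalized chart) up to a small rescaling. Choosing the target model to be a diffeomorphism with an arbitrarily large number of hyperbolic periodic orbits of a designated least period $N$—for instance a standard-map-like embedding with $M$ well-separated hyperbolic $N$-orbits—and taking $N\geq n_0$, $m\cdot N$ equal to the final period $n$, and $M$ large enough that $M>L\cdot N_{j,k}(n)$, yields a perturbation of $F_0$ lying in $\cV_{j,k}(L,n_0)$ within distance $\vep$, which proves density.

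To obtain the final assertion that a dense subset of $\cU_{j,k}$ consists of analytic diffeomorphisms, I would observe that each of the three perturbative moves above (creating the saddle with homoclinic tangency, producing the elliptic point, and implanting the model map in normal-form coordinates) can be performed by a compactly supported smooth perturbation, which can in turn be $C^\infty$-approximated by an analytic area-preserving perturbation via the standard Weinstein/Zehnder density of analytic symplectic maps inside $\diff$. Since $\cV_{j,k}(L,n_0)$ is $C^\infty$-open, the approximation stays inside $\cV_{j,k}(L,n_0)$, and a diagonal argument over $L$ and $n_0$ delivers analytic elements of $\cU_{j,k}$ near any prescribed $F\in\cN_\Sigma$.

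The main obstacle will be the control of the \emph{least} period: the renormalization near $p$ naturally produces periodic orbits of period $n=mN$ for the original map, but one must rule out that these are actually orbits of some strictly smaller period, since the theorem counts hyperbolic points whose minimal period is exactly $n$. I expect to handle this by choosing the implanted model map to have hyperbolic periodic points whose orbits are geometrically confined to disjoint sub-discs of the renormalization chart, in such a way that any proper-divisor periodicity would force two distinct orbits to coincide under an iterate of $F$, contradicting the separation. A related subtlety is that the elliptic point $p$ provided by Duarte's construction must be of generic Birkhoff normal form so that the Moser linearization used to build the renormalization chart is actually available; this is again a $C^\infty$-generic, and analytically attainable, condition, so it can be folded into the perturbation scheme without affecting residuality.
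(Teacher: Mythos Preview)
The paper does not give its own proof of this statement: Theorem~\ref{T.Asaoka} is quoted verbatim as a result of Asaoka~\cite{Asaoka}, with the sole additional remark (attributed to Berger) that the open set~$\cN_\Sigma$ may be taken to be the Newhouse domain of~$\diff$. There is therefore nothing in the paper to compare your argument against.

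That said, your outline is a reasonable sketch of the strategy Asaoka actually carries out. The Baire reduction to showing that each $\cV_{j,k}(L,n_0)$ is open and dense in the Newhouse domain is exactly the right framework, and the mechanism you invoke---Duarte's conservative Newhouse phenomenon to produce generic elliptic periodic points of arbitrarily high period, followed by a renormalization/implantation step in a normal-form chart around such a point---is the core of Asaoka's construction. The two subtleties you flag (ensuring the implanted orbits have the correct \emph{minimal} period, and arranging a generic Birkhoff normal form at the elliptic point so that the renormalization is available) are genuine and are handled in~\cite{Asaoka}. Your remark on analytic density is also in the right spirit, though the precise approximation statement one needs is for area-preserving maps of a disk with boundary rather than an abstract ``Weinstein/Zehnder'' density; Asaoka addresses this explicitly. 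In short: the paper treats this as a black box, and your sketch is a faithful summary of what is inside that box.
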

\begin{remark}
As observed by P. Berger~\cite{Berger}, we can take the open set $\cN_\Sigma$ to be the Newhouse domain of $\diff$. For the benefit of the reader, in Appendix~\ref{A.Newhouse} we recall a few basic properties of this open set of area-preserving diffeomorphisms.
\end{remark}

We already have all the information we need to prove Theorem~\ref{T.main}. Specifically, as residual sets in the space of braided fields~$\tcB$ are dense because of the Baire property, Theorem~\ref{T.main} now follows from the following fact:

\begin{proposition}\label{P.final}
Consider the set
\[
\cV:=\cJ^{-1}\left(\cUni \cap \bigcap_{j,k=1}^\infty \cU_{j,k}\right)
\]
of braided fields in $\overline{\Om}$. Then this set is residual in the open set $\cJ^{-1}(\cN_\Sigma)\subset\tcB$. Moreover, for any braided field $B\in\cV$, there does not exist a smooth diffeomorphism $Y$ of~$\overline\Omega$ such that $Y_*B$ is an MHS equilibrium.
\end{proposition}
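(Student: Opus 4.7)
The plan is to first establish the residuality of $\cV$ using the two ingredients $\cUni$ and $\cU_{j,k}$ already proved, and then derive the non-relaxation claim by contradiction, combining the rigidity result of Theorem~\ref{T.rigidity} with the uniform bound of Lemma~\ref{L.N} and the fast-growth property of Theorem~\ref{T.Asaoka}.

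For the residuality statement, I would observe that $\cUni$ is residual in $\diff$ by Theorem~\ref{T.cU}, and each $\cU_{j,k}$ is residual in the common open set $\cN_\Sigma\subset\diff$ by Theorem~\ref{T.Asaoka}. Since a countable intersection of residual subsets of a Baire space is residual, $\cW:=\cUni\cap\bigcap_{j,k=1}^\infty\cU_{j,k}$ is residual in $\cN_\Sigma$. By Lemma~\ref{L.residual}, $\cV=\cJ^{-1}(\cW)$ is then residual in the open set $\cJ^{-1}(\cN_\Sigma)\subset\tcB$.

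For the non-relaxation claim, suppose toward a contradiction that some $B\in\cV$ admits a volume-preserving $C^\infty$ diffeomorphism $Y$ of $\BOm$ for which $\overline B:=Y_*B$ is an MHS equilibrium. Since $B\in\cJ^{-1}(\cUni)$, Theorem~\ref{T.rigidity} forces
\[
\overline B=u^j_c=c_0V_j+\sum_{k=1}^{d_j}c_kv_{j,k}
\]
for some $j\geq1$ and some $c\in\RR^{d_j+1}$, with $c\neq 0$ since $B$ is braided and hence nowhere vanishing. Setting $c':=c/|c|\in\SS^{d_j}$, we have $\overline B=|c|\,u^j_{c'}$; because the first-return map depends only on the unparametrized orbits, rescaling by the positive constant $|c|$ leaves all Poincaré maps unchanged. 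Since $\Sigma$ is a disk-like global transverse section of $B$ with $\partial\Sigma\subset\partial\Om$, its image $Y(\Sigma)\cong\overline\bD$ is a global transverse section of $\overline B$ (and hence of $u^j_{c'}$) with boundary in $\partial\Om$; therefore $c'\in\cC_j$, and so $c'\in K_{j,k}$ for some $k\geq1$.

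Next I would chain together the conjugacies. Poincaré maps of the same vector field on two different global transverse sections are smoothly conjugate via the flow projection, so $\Pi_{u^j_{c'}}|_{Y(\Sigma)}$ is smoothly conjugate to the reference map $\Pi_{j,k,c'}$ on $\Sigma_{j,k}$. Combining this with the identity $\Pi_{\overline B}|_{Y(\Sigma)}=(Y|_\Sigma)\circ\Pi_B\circ(Y|_\Sigma)^{-1}$ and with Equation~\eqref{E.cJYB}, we conclude that $\cJ(B)$ is smoothly conjugate to $\Pi_{j,k,c'}$. Because smooth conjugacy preserves both the minimal period and the hyperbolicity of periodic points, Lemma~\ref{L.N} gives
\[
\#\Per(\cJ(B),n)=\#\Per(\Pi_{j,k,c'},n)\leq N_{j,k}(n)\qquad\text{for every }n\geq 1.
\]
On the other hand, $B\in\cJ^{-1}(\cU_{j,k})$ and Theorem~\ref{T.Asaoka} together force $\limsup_{n\to\infty}\#\Per(\cJ(B),n)/N_{j,k}(n)=\infty$, a direct contradiction. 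The main obstacle I anticipate is the careful bookkeeping of this chain of conjugacies: one must juggle three sections ($\Sigma$, $Y(\Sigma)$ and $\Sigma_{j,k}$) and two vector fields ($\overline B$ and its rescaling $u^j_{c'}$), exploiting the scale invariance of the first-return map to identify $\Pi_{\overline B}$ with $\Pi_{u^j_{c'}}$; once this identification is clean, the contradiction is purely quantitative and immediate.
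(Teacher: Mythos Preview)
Your proposal is correct and follows essentially the same approach as the paper: residuality via Theorems~\ref{T.cU}, \ref{T.Asaoka} and Lemma~\ref{L.residual}, then contradiction through Theorem~\ref{T.rigidity}, the section-independence of hyperbolic periodic point counts, and Lemma~\ref{L.N}. Your bookkeeping of the conjugacy chain (three sections, the rescaling from $c$ to $c'=c/|c|$) is in fact slightly more explicit than the paper's, which compresses this into a one-line remark citing~\cite{Palis}.
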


\begin{proof}
We start by noticing that $\cJ^{-1}(\cN_\Sigma)$ is open because $\cN_\Sigma$ is an open subset of $\diff$ and $\cJ$ is a continuous map by Lemma~\ref{L.mapJ}. Moreover, since the countable intersection of residual sets is residual, then Theorems~\ref{T.cU} and~\ref{T.Asaoka} imply that
$$\cUni \cap \bigcap_{j,k=1}^\infty \cU_{j,k}$$
is residual in $\cN_\Sigma$, and therefore $\cV$ is residual in~$\cJ^{-1}(\cN_\Sigma)$ by Lemma~\ref{L.residual}.
	
Let $B\in\cV$, and assume that there is a smooth diffeomorphism $Y$ of~$\overline\Omega$ such that $\lB:=Y_*B$ is an MHS equilibrium. Since $\cJ(B)\in\cUni $, Theorem~\ref{T.rigidity} ensures that $\lB=u^j_c$ for some constants $j,c$. $B$ being braided, it is clear that $\lB$ admits the disk-like global transverse section $Y(\Sigma)$, and hence $c/|c|\in \mathcal C_j$. If $K_{j,k}$ is any element of the cover of~$\cC_j$ which contains the unit vector $c/|c|$, we then infer that
	\[
	\# \Per(\cJ(B),n)=\#\Per(\Pi_{j,k,c},n)\leq N_{j,k}(n)
	\]
	for all~$n$, where we have used that the number of hyperbolic periodic points of the Poincar\'e map of $B$ is invariant under diffeomorphisms of the vector field, and does not depend on the particular disk-like global section that is taken (here $\Sigma_{j,k}$ rather than $Y(\Sigma)$; see e.g.~\cite[Chapter~3.1]{Palis}). This contradicts the assumption that $\cJ(B)\in\cU_{j,k}$, which completes the proof of the proposition.
\end{proof}

\begin{remark}\label{R.Newhouse}
If the Newhouse domain turns out to be dense in the space of $C^\infty$ area-preserving diffeomorphisms of the disk, as conjectured by some authors~\cite{Tura15}, we obtain that the set of braided fields~$\cV$ that cannot topologically relax to an MHS equilibrium is dense in the whole space of braided fields~$\tcB$.
\end{remark}

\begin{remark}\label{R.analytic}
In addition to the hyperbolicity of the periodic points we are counting and the compactness of the set~$K_{j,k}$, the proof of Lemma~\ref{L.N} only uses one nontrivial ingredient: the fact that the Beltrami fields~$u^j_c$ are analytic up to the boundary. In the proof of Proposition~\ref{P.final}, the connection between the analytic fields~$u^j_c$ and a residual set of $C^\infty$~braided fields is achieved via our generic rigidity result, cf. Theorem~\ref{T.rigidity}. It is tempting to lean more heavily into techniques from the theory of analytic dynamical systems and try to obtain an analog of Theorem~\ref{T.main} without using any rigidity results, only abstract properties of analytic diffeomorphisms. However, the whole proof breaks down because the spaces of analytic area-preserving diffeomorphisms of~$\BSi$ and of analytic vector fields on~$\BOm$ (endowed with the $C^\infty$ topology) are neither separable nor Baire. This failure, which also appears in analogous problems about residual properties of diffeomorphisms~\cite{Asaoka}, should be regarded as a sanity check of sorts. Heuristically, it shows that the heart of Theorem~\ref{T.main} is a nontrivial generic rigidity property of MHS equilibria, not abstract nonsense.
\end{remark}

\section{Genericity by means of the map~$\cJ$}
\label{S.cJ}

Our goal in this section is to show that generic properties of the set of braided fields $\cB(\overline\Om)$ can be studied in terms of generic properties of the diffeomorphisms $\diff$. This result turns out to be crucial for our approach because it allows us to import ideas and techniques from the theory of area-preserving diffeomorphisms of the disk. Let us start by proving Proposition~\ref{P.Pi}, which says that the Poincar\'e map of a braided field preserves a measure on the surface~$\Si$ that depends on the field itself:

\begin{proof}[Proof of Proposition~\ref{P.Pi}]
	As the swirl is positive (i.e., $B^\te>0$ in~$\BOm$), let us consider the vector field
	\[
	\tB := \frac1{B^\te(r,\te,z)}B= \frac{B^r(r,\te,z)}{B^\te(r,\te,z)}\partial_r+ \partial_\te+ \frac{B^z(r,\te,z)}{B^\te(r,\te,z)}\partial_z\,.
	\]
Since $\tB$ and~$B$ are proportional, both fields define the same Poincar\'e map on the transverse section~$\Si$.
Moreover, if $\tilde\phi^t$ denotes the time~$t$ flow of~$\tB$, it is easy to see that the Poincar\'e map of~$B$, $\Pi_B:\BSi\to\BSi$, is  given by the restriction to~$\BSi$ of $\tilde\phi^{2\pi}$.

Given a Borel set $V\subset\BSi$, we use the notation
\[
\mu_B(V)=\int_V B^\te(r,0,z)\, r\, dr\, dz
\]
for its $\mu_B$-measure and
\[
V_\de:=\{(r,\te,z): (r,z)\in V \text{ and } |\te|<\de/2\}\subset\BOm
\]
for its thickening of width~$\de\ll1$. Since $B^\te\tB$ is obviously divergence-free, the flow~$\tilde\phi^t$ must preserve the volume measure $B^\te(r,\te,z)\, r\, dr\, d\te\, dz$ on~$\BOm$. Note that, for small~$\de$, the invariant measure of the set~$V_\de$ is
\begin{align}
\int_{V_\de}	B^\te(r,\te,z)\, r\, dr\, d\te\, dz&= \int_{V_\de}	[B^\te(r,0,z)+O(\de)]\, r\, dr\, d\te\, dz\notag\\
&=\de \, \mu_B(V)+O(\de^2)\,.\label{E.measVde}
\end{align}
Furthermore, since $\tilde\phi^{2\pi}(r,\te,z)=\tilde\phi^{2\pi}(r,0,z)+O(\de)$ for all $(r,\te,z)\in V_\de$, we infer that
\begin{align*}
\int_{\tilde\phi^{2\pi}(V_\de)}	&B^\te(r,\te,z)\, r\, dr\, d\te\, dz= \int_{-\de/2}^{\de/2}\left(\int_{\Pi_B(V)}	[B^\te(r,0,z)\, r+O(\de)]\,  dr\,  dz\right)d\te\\
&=\de \, \mu_B(\Pi_B(V))+O(\de^2)\,.
\end{align*}	
Equating this quantity with~\eqref{E.measVde} and letting $\de\to0$ yields $\mu_B(\Pi(V))=\mu_B(V)$ for any Borel set~$V\subset\BSi$, as claimed.
\end{proof}

Next we prove Lemma~\ref{L.mapJ}, which concerns the existence of a continuous map $\cJ:\tcB\to\diff$ so that $\cJ(B)$ is conjugate to the Poincar\'e map of~$B$:

\begin{proof}[Proof of Lemma~\ref{L.mapJ}]
For any braided field $B\in\tcB$, consider the constant
	\[
	c_B:=\mu(\BSi)/\mu_B(\BSi)\,,
	\]
and recall that
\[
\mu_B=B^\theta(r,0,z)\,r\,dr\,dz\,.
\]

By~\cite{DM}, there exists a smooth diffeomorphism~$Y_B$ of~$\BSi$ which pulls the measure~$\mu$ into~$c_B\mu_B$:
	\[
	Y_B^*\mu= c_B \mu_B\,.
	\]
This diffeomorphism is nonunique. However, since the density of the volume measure $\mu_B$ depends continuously on~$B$, the method of proof in~\cite{DM} provides a continuous map $B\in\tcB\mapsto Y_B\in\Diff^+(\Si)$, endowed with their $C^\infty$~topologies.
This is because the diffeomorphism~$Y_B$ is obtained constructively by applying Steps~1 and~2 in the proof of~\cite[Theorem~1']{DM} to the equation
	\[
	\det \nabla Y_B = B^\theta(r,0,z)\,r\quad\text{in }\Si\,,\qquad Y_B=\mathrm{identity}\quad\text{on }\pd\Si\,,
	\]
	whose coefficients depend continuously on~$B$, and the solution is bounded as
	\[
	\|Y_B\|_{C^k(\Sigma)}+
	\|Y_B^{-1}\|_{C^k(\Sigma)}\leq C_k\left(\|B^\theta\|_{C^k(\Sigma)}
	+ \Big\|\frac{1}{B^\theta}\Big\|_{C^k(\Sigma)}\right)
	\]
	for all~$k$. Setting
$$\cJ(B):=Y_B\circ\Pi_B\circ Y_B^{-1}\,,$$
it readily follows by construction that $\cJ(B)\in\diff$,
and using the well known fact~\cite[Chapter~3.1]{Palis} that the Poincar\'e map also depends continuously on~$B$, we finally infer that the map~$\cJ$ is continuous.
\end{proof}

The central result of this section is Lemma~\ref{L.residual}, which ensures that the set $\cJ^{-1}(\cU)$ is locally residual if~$\cU$ is locally residual. To establish this, we prove that the map $\cJ$ is open and onto.

\begin{proof}[Proof of Lemma~\ref{L.residual}]
The first observation is that the map $\cJ:\tcB\to\diff$ is onto. Notice that $\mu= r\, dr\, dz$ and that using the coordinates~$(r,\te,z)$ we can identify $\Om$ with $\Si\times (\RR/2\pi\ZZ)$ and the Euclidean volume with $\mu\,d\te$. Since the braided field $\partial_\te\in\cB(\overline\Om)$ yields a Poincar\'e map that is the identity, and the group of diffeomorphisms $\diff$ is connected by $C^\infty$ arcs~\cite{Sauli}, the surjectivity follows from a straightforward application of a theorem of Treschev~\cite{Treschev}, who showed that given a diffeomorphism $F\in \diff$, there exists a divergence-free braided field $B\in \tcB$ whose Poincar\'e map is $\Pi_B=F$ and $\mu_B=\mu$, so that $\cJ(B)=\Pi_B=F$.

Therefore, $\cJ:\tcB\to \diff$ is an onto continuous map by Lemma~\ref{L.mapJ}. To show it is also open, we argue as follows. As the spaces~$\tcB$ and $\diff$ are metrizable, let us respectively denote by $d_{\cB}$ and~$d$ the distance functions that define the smooth topology, e.g.,
	\begin{align*}
	d_{\cB}(B,B')&:=\sum_{k=0}^\infty 2^{-k}\min\{\|B-B'\|_{C^k(\BOm)},1\}\,,\\
	d(F,F') &:=\sum_{k=0}^\infty 2^{-k}\min\{\|F-F'\|_{C^k(\BSi)},1\}\,.
	\end{align*}
	
Suppose that $\cJ$ is not an open map, so there exists an open subset $U\subset\tcB$ and a diffeomorphism $F_0=\cJ(B_0)$ with $B_0\in U$ such that, for all $n\geq1$, there is some $F_n\in\diff\backslash \cJ(U)$ such that $d(F_0,F_n)<1/n$. Taking the Poincar\'e map $\Pi_{B_0}=Y_{B_0}^{-1}\circ F_0\circ Y_{B_0}$, where the smooth diffeomorphism $Y_{B_0}$ of $\Si$ was introduced in Lemma~\ref{L.mapJ}, we set the maps
\[
\Pi_n:=Y_{B_0}^{-1}\circ F_n\circ Y_{B_0}\,,
\]
that are diffeomorphisms in $\Diff^+_{\mu_{B_0}}(\BSi)$. As, just as before, $\Diff^+_{\mu_{B_0}}(\BSi)$ is connected by $C^\infty$ arcs, the aforementioned Treschev's theorem~\cite{Treschev}  then implies that for all $n\geq0$ there exists a divergence-free braided field $B_n\in \tcB$ whose Poincar\'e map is $\Pi_{B_n}=\Pi_n$ and the invariant area measure is $\mu_{B_n}=\mu_{B_0}$. Moreover, Treschev's suspension map assigning a vector field~$B_n$ to each~$\Pi_n$ is continuous~\cite[Corollary]{Treschev}, so the distance between the fields can be estimated as
	\begin{equation}\label{e.close}
	d_{\cB}(B_n,B_0)\leq C/n\,.
	\end{equation}
Accordingly, for large enough $n$, the braided field $B_n$ lies in $U$ and
$$\cJ(B_n)=Y_{B_0}\circ\Pi_{n}\circ Y_{B_0}^{-1}=F_{n}\,,$$
where we have used that the conjugacy maps $Y_{B_n}=Y_{B_0}$ for all $n\geq0$ because $\mu_{B_n}=\mu_{B_0}$. This means that $F_n\in\cJ(U)$, which is a contradiction. We thus conclude that the map $\cJ$ is open.

Since we have shown that the continuous map $\cJ$ is open and onto, it is then standard that if a set $\cV'$ is dense in an open set $\cV\subset\diff$, the preimage
\[
\cJ^{-1}(\cV'):=\{B\in\tcB: \cJ(B)\in\cV'\}
\]
is dense in the open set $\cJ^{-1}(\cV)\subset \tcB$. Since residual sets are countable intersections of sets whose interior is dense, the lemma follows.
\end{proof}

\section{Generic non-integrability: proof of Theorem~\ref{T.cU}}
\label{S.non-integrable}

In this section we prove Theorem~\ref{T.cU}, which ensures that a residual set of $\mu$-preserving diffeomorphism of~$\Si$ are non-integrable and Morse--Smale on the boundary. We present two proofs but the first one is more illustrative from the dynamical viewpoint.

First we show that the set
\[
\MS:=\{ F\in\diff: F|_{\pd\Si} \text{ is a Morse--Smale diffeomorphism of }\pd\Si\}
\]
is open and dense in $\diff$. To see this, we shall use the following auxiliary result:

\begin{lemma}[\cite{Tsuboi}, Lemma 2.2]\label{L.Tsuboi}
The restriction to the boundary defines an onto and open continuous map $\tr: \diff\to \Diff^+(\pd\Si)$, where $\Diff^+(\pd\Si)$ denotes the space of $C^\infty$ diffeomorphisms of~$\pd\Si$ that are connected with the identity.
\end{lemma}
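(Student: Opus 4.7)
The plan is to prove three properties in sequence: continuity of $\tr$, surjectivity, and openness. Continuity is immediate from the definitions of the $C^\infty$ topologies on both spaces, since restricting a smooth function to a closed submanifold is continuous in each $C^k$ seminorm. The substantive content lies in surjectivity and openness, both of which will be reduced to an application of the Dacorogna--Moser theorem~\cite{DM} with Dirichlet boundary conditions.

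For surjectivity, given any $f\in\Diff^+(\pd\Si)$, I would first extend $f$ to a smooth orientation-preserving diffeomorphism $\tilde F$ of $\BSi$ isotopic to the identity. This can be done using a collar neighborhood $U\cong\pd\Si\times[0,1)$ of $\pd\Si$: choose a smooth isotopy $\{f_s\}_{s\in[0,1]}$ of $\pd\Si$ with $f_0=\mathrm{id}$ and $f_1=f$, pick a smooth cutoff $\chi:[0,1)\to[0,1]$ equal to $1$ near $s=0$ and to $0$ near $s=1$, and define $\tilde F(x,s):=(f_{\chi(s)}(x),s)$ on the collar, extending by the identity away from the collar. The pulled-back volume form $\tilde F^*\mu$ has the same total volume as $\mu$ on $\BSi$ because $\tilde F$ is an orientation-preserving self-diffeomorphism, so the Dacorogna--Moser theorem yields a diffeomorphism $\phi$ of $\BSi$ with $\phi|_{\pd\Si}=\mathrm{id}$ and $\phi^*(\tilde F^*\mu)=\mu$. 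The construction of~$\phi$ via Moser's interpolation also yields an isotopy from the identity to~$\phi$ through $\mu$-preserving maps, so $F:=\tilde F\circ\phi$ lies in $\diff$ and satisfies $\tr(F)=f$.

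For openness, I would make essentially the same construction quantitative. Given $F_0\in\diff$ and a perturbation $f$ close to $f_0:=\tr(F_0)$ in $C^\infty(\pd\Si)$, set $g:=f\circ f_0^{-1}$, which is close to the identity on $\pd\Si$. Using the collar construction applied to $g$ (with $f_s$ now a short isotopy from $\mathrm{id}$ to $g$), produce a diffeomorphism $\tilde g$ of $\BSi$ that equals $g$ on $\pd\Si$, equals the identity outside a small collar, and whose $C^k$ distance to the identity is controlled by the $C^k$ distance of $g$ to $\mathrm{id}$ on $\pd\Si$ for every $k$. Then $\tilde F:=\tilde g\circ F_0$ satisfies $\tilde F|_{\pd\Si}=f$ and is close to $F_0$ in $C^\infty(\BSi)$. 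Applying Dacorogna--Moser again yields $\phi$ with $\phi|_{\pd\Si}=\mathrm{id}$ and $\phi^*(\tilde F^*\mu)=\mu$; by the continuous dependence of $\phi$ on $\tilde F^*\mu$ implicit in the constructive proof of~\cite{DM}, $\phi$ is close to the identity. The diffeomorphism $F:=\tilde F\circ\phi$ is then $\mu$-preserving, restricts to $f$ on $\pd\Si$, and is $C^\infty$-close to $F_0$; since $\diff$ is an open connected component of the group of $\mu$-preserving diffeomorphisms of $\BSi$, $F\in\diff$ for perturbations $f$ sufficiently close to $f_0$, which proves openness of $\tr$.

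The main technical obstacle is the quantitative control of both steps in the smooth topology. The collar extension is quantitative by construction thanks to the smooth cutoff, provided one verifies the standard fact that the $C^k$ seminorms of $\tilde g-\mathrm{id}$ on $\BSi$ are bounded by the $C^k$ seminorms of $g-\mathrm{id}$ on $\pd\Si$ (the constants depending only on the cutoff and the collar). The Dacorogna--Moser step is the subtler point, but the constructive argument employed in~\cite{DM}, already invoked in the proof of Lemma~\ref{L.mapJ}, produces $\phi$ as the solution of a pair of elliptic boundary value problems whose $C^k$ estimates scale linearly in the $C^k$ distance of the target volume form from $\mu$. This gives precisely the continuous dependence required to carry the perturbation $f$ back to a perturbation $F$ of $F_0$ inside $\diff$.
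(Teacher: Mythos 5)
The paper does not prove this lemma at all: it is imported verbatim from \cite{Tsuboi} (Lemma 2.2), so there is no internal proof to compare against. Your argument is a correct, self-contained proof along what is essentially the standard route, and it is fully consistent with the toolkit the paper uses elsewhere: you extend the boundary diffeomorphism into a collar by a cut-off isotopy, and then restore $\mu$-invariance by the Dacorogna--Moser theorem \cite{DM} with Dirichlet data $\phi=\mathrm{id}$ on $\pd\Si$ --- the same constructive version, with $C^k$ estimates and continuous dependence on the density, that the paper already invokes in the proof of Lemma~\ref{L.mapJ}; the quantitative form of both steps then gives openness exactly as you describe. Two small points are worth tightening. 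First, Moser interpolation produces an isotopy of $\phi$ to the identity through diffeomorphisms pulling back the \emph{interpolated} volume forms, not through $\mu$-preserving maps, so your parenthetical claim is slightly off as stated; but this is harmless, since $\BSi$ is a closed disk, every orientation-preserving diffeomorphism of it is isotopic to the identity, and if one insists on $\mu$-preserving isotopies the smooth arcwise connectedness of $\diff$ (cited from \cite{Sauli} in the paper) settles membership in $\diff$ in both the surjectivity and the openness steps. Second, for the openness step you should record the canonical choice of isotopy for $g$ near the identity (e.g.\ $f_s(x)=x+su(x)$ with $g=\mathrm{id}+u$ in a lift to $\RR$), which is what makes the map $g\mapsto\tilde g$ continuous at the identity in every $C^k$ norm; with that made explicit, the chain of estimates you outline closes the argument.
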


Therefore, since the set of Morse--Smale diffeomorphisms of~$\pd\Si\cong \mathbb S^1$ is open and dense in the space of $C^\infty$~diffeomorphisms \cite[Chapter~4.4]{Palis} and the boundary trace map $\tr: \diff\to \Diff^+(\pd\Si)$ is onto, open and continuous, it is standard that
\[
\MS:=\tr^{-1}(\{f\in \Diff^+(\pd\Si): f \text{ is Morse--Smale}\})
\]
is also open and dense.

Our objective in the rest of the proof is to show that the set $\cUni$ of non-integrable diffeomorphisms whose boundary restriction is Morse--Smale
is residual in $\diff$. To this end it is convenient to introduce the following sets of $\mu$-preserving diffeomorphisms:
\begin{align*}
&\mathcal U_{\text{ND}}:=\{ F\in\diff:\text{ all the periodic points of }F \text{ are nondegenerate}\}\,,\\
&\mathcal U_{\text{PD}}:= \{ F\in\diff:\text{ the set of periodic points of }F \text{ is dense on }\Sigma\}\,.
\end{align*}
Recall that an $N$-periodic point of a diffeomorphism is {\em nondegenerate} if it is either elliptic or hyperbolic; in particular, it is isolated in the set of $N$-periodic points.

It turns out that both sets of diffeomorphisms are generic. The fact that $\mathcal U_{\text{ND}}$ is residual in $\diff$ was proved by Robinson~\cite[Theorem~1B]{Robinson}, while an analogous result for the set $\mathcal U_{\text{PD}}$ was proved by Pirnapasov and Prasad in~\cite{Prasad}. Accordingly, the space of diffeomorphisms whose periodic set consists of nondegenerate periodic points that are dense on $\Sigma$, i.e., $\mathcal U_{\text{ND}}\cap \mathcal U_{\text{PD}}$, is residual in $\diff$. We claim that if $F\in\mathcal U_{\text{ND}}\cap \mathcal U_{\text{PD}}$ then $F$ is non-integrable:

\begin{proposition}\label{P.aa}
$\mathcal U_{\mathrm{ND}}\cap \mathcal U_{\mathrm{PD}}\cap\MS\subset \mathcal \cUni$.	
\end{proposition}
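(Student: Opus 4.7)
The plan is to argue by contradiction. I would suppose $F \in \mathcal U_{\mathrm{ND}} \cap \mathcal U_{\mathrm{PD}} \cap \MS$ admits a nonconstant smooth first integral $h \in C^\infty(\overline\Sigma)$ with $h \circ F = h$ and derive a contradiction. Since the Morse--Smale condition on the boundary is already built into the hypothesis $F\in\MS$, the only content to establish is the non-integrability clause, i.e.\ that $h$ must in fact be constant.

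The key step is a linearization argument at periodic points. If $p$ is a periodic point of least period~$n$, then differentiating $h\circ F^n=h$ at $p$ yields
$$
dh(p)\circ dF^n(p)=dh(p),
$$
so $dh(p)$ is a covector fixed by the dual $dF^n(p)^{T}$. For a nondegenerate~$p$ (elliptic or hyperbolic), the eigenvalues of $dF^n(p)$ are either a reciprocal pair of real numbers off the unit circle or a non-real complex conjugate pair $e^{\pm i\theta}$ on the unit circle; in neither case does $1$ belong to the spectrum, hence $dh(p)=0$.

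From here the proof concludes at once. By $F\in \mathcal U_{\mathrm{PD}}$ the periodic points of~$F$ are dense in $\overline\Sigma$, and by $F\in\mathcal U_{\mathrm{ND}}$ all of them are nondegenerate, so the previous step produces a dense subset of $\overline\Sigma$ on which $dh$ vanishes. Continuity of $dh$ (which holds since $h\in C^\infty$) upgrades this to $dh\equiv 0$ on $\overline\Sigma$, and connectedness of $\overline\Sigma$ forces $h$ to be constant, contradicting the nonconstancy assumption.

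I do not expect a serious obstacle in this argument. The only point requiring care is verifying that the eigenvalue $1$ never appears in the spectrum of $dF^n(p)$ at a nondegenerate periodic point: this amounts to excluding the parabolic eigenvalues $\pm 1$ from the elliptic case, and follows automatically from the definition of nondegeneracy adopted in the paper (which includes isolation of the point among the $n$-periodic points). It is worth emphasizing that density of periodic orbits by itself is not enough: an integrable area-preserving twist map has dense sets of \emph{degenerate} periodic orbits on its rational invariant circles, and it is precisely the additional nondegeneracy condition encoded in $\mathcal U_{\mathrm{ND}}$ that forces the differential of any first integral to vanish.
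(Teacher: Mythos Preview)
Your argument is correct and takes a genuinely different, more elementary route than the paper's.

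The paper argues geometrically: it first uses the Morse--Smale hypothesis on $\partial\Sigma$ to force $h|_{\partial\Sigma}$ to be constant, then picks a regular value of~$h$ via Sard's theorem to obtain a closed invariant curve $\gamma\subset\Sigma$, and finally invokes Arnold's action--angle coordinates to put~$F$ in the normal form $(R,\varphi)\mapsto(R,\varphi+\omega_F(R))$ on an annular neighbourhood of~$\gamma$. A case analysis on the frequency function~$\omega_F$ then produces either a whole circle of periodic points (contradicting $\mathcal U_{\mathrm{ND}}$) or an open annulus free of periodic points (contradicting $\mathcal U_{\mathrm{PD}}$). Your linearization argument bypasses all of this machinery: it uses neither action--angle coordinates nor Sard's theorem, and in fact does not use the Morse--Smale boundary condition at all in establishing non-integrability, so it actually proves the sharper inclusion $\mathcal U_{\mathrm{ND}}\cap\mathcal U_{\mathrm{PD}}\subset\{F\in\diff:\text{$F$ is non-integrable}\}$. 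What the paper's longer argument buys is that it exposes the action--angle structure underlying integrability; this structure is precisely what is exploited in the alternative proof given immediately afterwards, which avoids~$\mathcal U_{\mathrm{PD}}$ in favour of a twist condition. One small point of wording in your last paragraph: the exclusion of the eigenvalue~$1$ at a nondegenerate periodic point follows directly from the paper's definition of nondegenerate as ``elliptic or hyperbolic'' (both cases visibly avoid the eigenvalue~$1$), rather than from the isolation property, which by itself would not suffice.
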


\begin{proof}
Take $F\in \mathcal U_{\mathrm{ND}}\cap \mathcal U_{\mathrm{PD}}\cap\MS$ and assume that is has a nonconstant smooth first integral, that is, a nonconstant function $h\in C^\infty(\Si)$
such that $h\circ F= h$. As $F|_{\pd\Si}$ is a Morse--Smale diffeomorphism of~$\pd\Si$, then $F|_{\pd\Si}$ does not admit any nonconstant first integral, so we easily infer that the function $h$ must be constant on the boundary (i.e., $h|_{\pd\Si}=c_0\in\RR$). Since~$h$ is not constant, by Sard's theorem it has a regular value $c\neq c_0$. Let $\ga$ be a connected component of the regular level set $h^{-1}(c)$. This level set cannot intersect~$\pd\Si$, so $\ga$ must be a closed curve (i.e., diffeomorphic to~$\SS^1$) that is contained in~$\Si$. Then it follows from~\cite[Example 5.3]{Arnold} that there exists a neighborhood $U\subset\Si$ of the curve~$\ga$ and (local) action-angle coordinates, that is, a $C^\infty$ diffeomorphism
	\begin{equation}\label{E.The}
	\Theta_F: (-R_0,R_0)\times (\RR/2\pi\ZZ)\to U\,,
	\end{equation}
	where $R_0>0$, in which~$F$ and the area measure read as
	\begin{equation}\label{E.The2}
	(\Theta_F^{-1}\circ F\circ\Theta_F)(R,\vp)= (R, \vp+\om_F(R))\,,\qquad \Theta_F^*\mu= dR\, d\vp\,,
	\end{equation}
for some frequency function $\om_F\in C^\infty((-R_0,R_0))$. If $\om_F$ is not a constant or if it is a constant that is a rational multiple of $2\pi$, we deduce that $F$ exhibits invariant closed curves filled by periodic points, which contradicts the fact that $F\in  \mathcal U_{\text{ND}}$. Therefore, $\om_F=2\pi C$ for some irrational number $C$, which in view of Equation~\eqref{E.The2} implies that all the invariant curves of $F$ are quasi-periodic, so in particular $F$ does not have any periodic points in the invariant set $U$. Since this contradicts the fact that $F\in \mathcal U_{\text{PD}}$, we finally infer that such a nonconstant first integral $h$ cannot exist, and hence $F\in \mathcal U_{\text{NI}}$ as we wanted to show.
\end{proof}

Finally, since the sets $\mathcal U_{\text{ND}}$, $\mathcal U_{\text{PD}}$ and $\MS$ are residual, their intersection $\mathcal U_{\text{ND}}\cap \mathcal U_{\text{PD}}\cap\MS$ is residual as well, which in turn implies that $\mathcal U_{\text{NI}}$ is also residual by Proposition~\ref{P.aa}. This completes the proof of Theorem~\ref{T.cU}.

\subsection{An alternative proof}

For the benefit of the reader we shall next provide an alternative proof of Theorem~\ref{T.cU}, based on classical ideas introduced by Markus and Meyer~\cite{MM74}, which does not use the set $\mathcal U_{\text{PD}}$ and the recent (and quite sophisticated) results of~\cite{Prasad}. Nevertheless, a major advantage of the less elementary but more direct proof given above is that it reveals that the key dynamical obstruction to integrability in the $C^\infty$ topology is the existence of a dense set of nondegenerate periodic points.

First, we define the set of integrable diffeomorphisms as
\begin{align*}
\cUi:=&\{ F\in\diff: F \text{ has a nonconstant first integral }h\in C^\infty(\Si)\}\,.
\end{align*}
Notice that the proof of Proposition~\ref{P.aa} shows the existence of local action-angle variables on some annular domain $U_F\subset\Sigma$ for any $F\in \cUi\cap\MS$ (of course, in general $U_F$ is not unique). We denote by $\NA$ the set of diffeomorphisms $F\in \cUi\cap\MS$ for which the frequency function $\om_F$ associated with the action-angle variables on $U_F$ is nonconstant.

It is easy to see that if $\NA$ is residual in $\cUi\cap\MS$, then $\cUni$ is residual too. Indeed, in this case, as
\[
(\diff\backslash \NA)\cap\MS=\cUni \cup\Big((\cUi\cap\MS)\backslash\NA\Big)\,,
\]
and since we showed in the proof of Proposition~\ref{P.aa} that
$$\mathcal U_{\text{ND}}\cap\MS\subset (\diff\backslash \NA)\cap\MS\,,$$
the fact that $\mathcal U_{\text{ND}}\cap\MS$ is residual (by a classical result of Robinson~\cite{Robinson}) and the assumption that the set of non-twist integrable diffeomorphisms $(\cUi\cap\MS)\backslash\NA$ is meagre immediately imply that $\cUni$ is residual as well.

Therefore, to complete the proof of Theorem~\ref{T.cU}, it suffices to show that $\NA$ is residual in $\cUi\cap\MS$. To this end, we observe that, taking a countable basis of the topology of $\Sigma$ we can fix a countable set of disks $\{B^{(k)}\}_{k=1}^\infty\subset\Sigma$ such that for any $F\in \cUi\cap\MS$ each annular component of the corresponding set $U_F$ where the action-angle variables are defined contains the domain $B^{(k)}$ for some $k$. We denote by
$$\cUi^{(k)}\subset\cUi$$
the set of integrable diffeomorphisms that admit action-angle variables in an annular set that contains $B^{(k)}$, and
$$\NA^{(k)}\subset\cUi^{(k)}$$
those diffeomorphisms whose corresponding frequency function on the aforementioned annular set is nonconstant.

If $F\in \cUi^{(k)}\cap\MS$ for some $k$, then $F$ takes the form
\[
F(R,\vp)=: (R, \vp+\om_F(R))
\]
in action-angle variables $(R,\vp)\in(-R_0,R_0)\times (\RR/2\pi\ZZ)$ on some domain $U^{(k)}_F\supset B^{(k)}$, and the invariant area measure is $dR\,d\vp$. If $\om_F$ is not a constant function, then $F\in\NA^{(k)}$. If $\om_F$ is constant, we introduce the diffeomorphism $F'\in \cUi^{(k)}\cap\MS$ defined as $F':=F$ in $\Si\backslash U^{(k)}_F$ and
\[
F'(R,\vp):=(R,\vp+\om_F+\ep E(R))
\]
in action-angle variables of $U^{(k)}_F$, where $\ep>0$ and where $E(R)$ is any nonconstant $C^\infty$~function supported in $(-R_0/2,R_0/2)$. Obviously, $F'\in \NA^{(k)}$. Since $\ep$ can be taken as small as desired, this shows that $\NA^{(k)}$ is dense in $\cUi^{(k)}\cap\MS$.

Next we show that $\NA^{(k)}$ is also open. Assume that $F,F'\in \cUi^{(k)}$ for some $k$, and $F\in \NA^{(k)}$. If $F$ and $F'$ are $C^\infty$-close to each other, Moser's twist theorem for area-preserving diffeomorphisms of the disk~\cite{Moser2} implies that near any invariant circle $\{R=R_1\}$ of $F$ with $\om_F'(R_1)\neq0$, the diffeomorphism $F'$ exhibits a positive measure set of invariant circles close to those of $F$ and with the same frequency. Since $\om_F$ is nonconstant, and both action-angle domains $U^{(k)}_F$ and $U^{(k)}_{F'}$ contain the same disk $B^{(k)}$, it necessarily follows that $\om_{F'}$ is not constant either, thus showing that $F'\in\NA^{(k)}$.

Summarizing, the previous arguments show that $(\cUi^{(k)}\cap\MS)\backslash\NA^{(k)}$ is meagre for all $k\in\NN$. It is obvious by definition that
\[
(\cUi\cap\MS)\backslash\NA\subset\bigcup_{k=1}^\infty (\cUi^{(k)}\cap\MS)\backslash\NA^{(k)}\,,
\]
so the property that a countable union of meagre sets is meagre, then yields that $(\cUi\cap\MS)\backslash\NA$ is a meagre set, as we wanted to show.

\section{A rigidity result for generic relaxation: proof of Theorem~\ref{T.rigidity}}
\label{S.rigidity}

Our objective in this section is to prove the rigidity result stated in Theorem~\ref{T.rigidity}. For this, let us now assume that $B\in \cJ^{-1}(\cUni )$ and that there exists a smooth volume-preserving diffeomorphism~$Y$ of $\overline\Omega$ such that $\lB:=Y_*B$ is an MHS equilibrium, where $\cUni $ was defined in Theorem~\ref{T.cU}. An elementary observation, which follows directly from the assumption that $\cJ(B)\in\cUni $ and which we will use repeatedly, is the following:

\begin{lemma}\label{L.fi}
	The field~$\lB$ does not admit any smooth first integrals. That is, if a function~$g\in C^\infty(\BOm)$ satisfies $\lB\cdot\nabla g=0$, then $g$ is constant.
\end{lemma}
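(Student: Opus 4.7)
The plan is to contradict the non-integrability of $\Pi_B$ (which we have at our disposal via Remark~\ref{R.cJPi}) by pulling back a putative first integral of $\lB$ along $Y$ and restricting to the cross-section~$\BSi$. Suppose for contradiction that there is a nonconstant $g\in C^\infty(\BOm)$ with $\lB\cdot\nabla g=0$, and set
\[
\tilde g:= g\circ Y\in C^\infty(\BOm)\,.
\]
Since $Y$ is a diffeomorphism, $\tilde g$ is nonconstant. Because $\lB=Y_*B$, the chain rule at any $x\in\BOm$ gives
\[
B(x)\cdot\nabla\tilde g(x) = \bigl(dY(x)\cdot B(x)\bigr)\cdot\nabla g\bigl(Y(x)\bigr)=\lB\bigl(Y(x)\bigr)\cdot\nabla g\bigl(Y(x)\bigr)=0\,,
\]
so $\tilde g$ is a smooth first integral of~$B$.

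Next, I would exploit the transversality $B^\te>0$ in two ways. First, the restriction $h:=\tilde g|_{\BSi}\in C^\infty(\BSi)$ satisfies $h\circ\Pi_B=h$, because for every $x\in\BSi$ the point $\Pi_B(x)$ lies on the same integral curve of~$B$ as~$x$ and $\tilde g$ is constant along integral curves of~$B$. By Remark~\ref{R.cJPi}, the assumption $\cJ(B)\in\cUni$ implies that $\Pi_B$ is non-integrable, and hence $h\equiv c$ on $\BSi$ for some constant~$c$. Second, every point of $\BOm$ lies on an integral curve of~$B$ that meets~$\BSi$: indeed, since $B^\te>0$ on the compact set $\BOm$ and $\BOm$ is axisymmetric, the $\te$-coordinate strictly increases along the flow, so every forward (or backward) trajectory must cross the section $\{\te=0\}\cap\BOm=\BSi$.

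Combining these two facts, $\tilde g$ is constant along $B$-orbits and takes the value $c$ on every $B$-orbit (since each orbit meets $\BSi$), so $\tilde g\equiv c$ on all of $\BOm$. Pulling back by $Y^{-1}$ yields $g\equiv c$ on~$\BOm$, which contradicts the assumption that~$g$ is nonconstant.

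The only delicate point is really the passage ``first integral of the flow $\Leftrightarrow$ $\Pi_B$-invariant function on $\BSi$'', which is precisely where the braided condition $B^\te>0$ (guaranteeing that $\BSi$ is a \emph{global} transverse section) is used; everything else is a formal diffeomorphism-invariance argument, so no genuine obstacle is expected.
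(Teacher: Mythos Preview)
Your proof is correct and follows essentially the same approach as the paper's: both reduce to the non-integrability of the Poincar\'e map via the conjugacy to $\cJ(B)$, and then use that the section is global to propagate constancy everywhere. The only cosmetic difference is that the paper works on the $\lB$ side with the section $\Sigma':=Y(\Sigma)$, whereas you pull back by $Y$ and work on the $B$ side with the original section $\BSi$; these are equivalent via the diffeomorphism~$Y$.
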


\begin{proof}
	
Since $\lB=Y_* B$, the disk-like closed submanifold $\Sigma':=Y(\Sigma)$ is a global transverse section of $\lB$, which then defines a return (or Poincar\'e) map $\Pi_\lB:\Sigma' \to\Sigma'$. It is clear that $\Pi_B$ and $\Pi_\lB$ are conjugate:
\[
\Pi_\lB=Y\circ \Pi_B\circ Y^{-1}
\]
and then by Equation~\eqref{E.cJYB}, $\Pi_\lB$ and $\cJ(B)$ are related by
	\begin{equation}\label{E.PilB}
	\Pi_{\lB}:= Y\circ Y_B^{-1}\circ\cJ(B)\circ Y_B\circ Y^{-1}\,.
	\end{equation}

If $g\in C^\infty(\BOm)$ is a first integral of~$\lB$, necessarily $h':=g|_{\Si'}\in C^\infty(\Sigma')$ is invariant under~$\Pi_{\lB}$, i.e., $h'\circ \Pi_\lB=h'$. This implies that
$$h:= h'\circ Y\circ Y^{-1}_B\in C^\infty(\Si)$$
satisfies $h\circ\cJ(B)= h$. As~$\cJ(B)\in\cUni $, from condition~(ii) in Theorem~\ref{T.cU} we obtain that $h$, and hence $h'$, must be constant. Since $\Sigma'$ is a global transverse section of $\lB$ we finally conclude that $g$ is constant on $\overline{\Om}$, as we wanted to show.
\end{proof}

The well known dichotomy that MHS equilibria are either Beltrami fields (with constant proportionality factor) or they are integrable, then yields this easy lemma:

\begin{lemma}\label{L.existela}
	There exists a real constant~$\la$ such that\smallskip
	\begin{gather}\label{E.existela}
		\begin{split}
\Div \lB=0\quad \text{and}\quad \curl \lB=\la \lB\qquad \text{in }\Om\,,\\
\lB\cdot N=0 	\qquad \text{on }\pd \Om\,.		
		\end{split}
	\end{gather}
\end{lemma}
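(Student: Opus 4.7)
The plan is to exploit Lemma~\ref{L.fi} twice: once on the pressure $P$, and then on the proportionality factor between $\curl\lB$ and $\lB$. The starting point is the MHS equation $\lB\times\curl\lB+\nabla P=0$. Taking the inner product with $\lB$ and using the fact that $(\lB\times\curl\lB)\cdot\lB=0$, we obtain $\lB\cdot\nabla P=0$. Thus $P\in C^\infty(\BOm)$ is a first integral of~$\lB$, so Lemma~\ref{L.fi} immediately forces $P$ to be constant. In particular $\nabla P\equiv 0$, and the MHS equation reduces to the pointwise identity $\lB\times\curl\lB=0$ on~$\BOm$.

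Next I would observe that $\lB$ is nowhere vanishing on~$\BOm$. Indeed, by hypothesis $B\in\tcB$ satisfies $B^\te>0$ on~$\BOm$, so $B$ has no zeros; as $\lB=Y_*B$ with $Y$ a diffeomorphism of~$\BOm$, the field~$\lB$ inherits this property. Consequently, the identity $\lB\times\curl\lB=0$ implies the existence of a function $\la\in C^\infty(\BOm)$ such that
\[
\curl\lB=\la\,\lB\qquad\text{in }\BOm\,.
\]
Concretely, $\la$ can be defined pointwise as the unique scalar for which $\curl\lB(x)=\la(x)\lB(x)$, and the smoothness of~$\la$ follows from the smoothness of~$\lB$, $\curl\lB$ together with the nonvanishing of~$\lB$ (locally one may divide by the largest component of~$\lB$).

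To conclude, I would take the divergence of the identity $\curl\lB=\la\,\lB$. Since $\Div\curl\lB=0$ and $\Div\lB=0$, this yields
\[
0=\Div(\la\,\lB)=\lB\cdot\nabla\la\,,
\]
so $\la$ is itself a smooth first integral of~$\lB$. Applying Lemma~\ref{L.fi} a second time, $\la$ must be constant on~$\BOm$, which is precisely the conclusion~\eqref{E.existela}. There is no genuine obstacle in this argument: the only potentially delicate step, namely the global smoothness of the proportionality factor, is handled for free because the topological braiding assumption on~$B$ is transported by~$Y$ into the nonvanishing of~$\lB$. Note that at this stage the constant $\la$ could in principle vanish; ruling this out (and so identifying $\la$ with one of the eigenvalues $\la_j$) is a separate matter that will be addressed subsequently using the harmonic field~$h_\Om$ defined in~\eqref{E.defhOm}.
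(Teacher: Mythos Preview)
Your proposal is correct and follows essentially the same route as the paper: apply Lemma~\ref{L.fi} first to the pressure~$P$ (from $\lB\cdot\nabla P=0$) to reduce to $\lB\times\curl\lB=0$, then use the nonvanishing of~$\lB$ (inherited from the braided field~$B$ via the diffeomorphism~$Y$) to write $\curl\lB=\la\,\lB$ for a smooth scalar, and finally apply Lemma~\ref{L.fi} a second time to~$\la$ after taking the divergence. The only cosmetic differences are that the paper writes the proportionality factor explicitly as $f=|\lB|^{-2}\,\lB\cdot\curl\lB$ and separately notes that the boundary condition $\lB\cdot N=0$ comes from $Y(\pd\Om)=\pd\Om$ together with $B\cdot N=0$ (which is in any case already part of the MHS hypothesis~\eqref{E.statEuler}).
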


\begin{proof}
By assumption $\lB$ is an MHS equilibrium, so there is a pressure function $P\in C^\infty(\overline\Om)$ such that	
\[
\lB \times \curl \lB +\nabla P=0\,, \qquad \Div \lB=0\,.
\]
In particular, $\lB\cdot\nabla P=0$, so $P$ is a first integral of $\lB$, which is then constant by Lemma~\ref{L.fi}. Therefore, $\lB\times\curl\lB=0$. Since $\lB=Y_*B$ does not vanish because $B\neq0$, this implies that
\[
\curl\lB= f\lB\,,
\]
with $f:= |\lB|^{-2}\lB\cdot\curl\lB\in C^\infty(\BOm)$. Taking the divergence of this equation and using that $\lB$ is divergence-free, we conclude that $\lB\cdot \nabla f=0$, so $f$ is a first integral of $\lB$. By Lemma~\ref{L.fi} again, then $f=\la$ for some constant~$\la$. The boundary condition $\lB\cdot N=0$ on~$\pd\Om$ easily stems from the facts that $Y$ maps $\pd\Om$ onto itself and $B\cdot N=0$.
\end{proof}

In fact, the constant factor $\lambda$ in Equation~\eqref{E.existela} cannot take any real value, but it must be an eigenvalue of the curl operator in $\Om$, that is, $\la=\la_j$ for some positive integer $j$, where the set $\{\la_j\}_{j=1}^\infty$ was introduced in Section~\ref{SS.NI}. This is the content of the following lemma:

\begin{lemma}\label{L.laeigen}
$\la=\la_j$ for some integer $j\geq1$.
\end{lemma}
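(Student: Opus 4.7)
I would prove the lemma by splitting $\lB$ along the one-dimensional space of harmonic fields on~$\Om$ and then reducing each resulting case to the spectral theorems already quoted in the excerpt.

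\textbf{Excluding $\la=0$.} If $\la=0$, then $\lB$ is harmonic (divergence-free, curl-free and tangent to $\pd\Om$), so by Hodge theory $\lB=\mu h_\Om$ for some $\mu\in\RR$; since $\lB\not\equiv 0$, we have $\mu\neq 0$. But then $r$ is a nonconstant smooth first integral of $\lB=\mu r^{-2}\pd_\te$, contradicting Lemma~\ref{L.fi}. Thus $\la\neq 0$ from now on.

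\textbf{Hodge splitting.} Define
\[
\al:=\|h_\Om\|_{L^2}^{-2}\int_\Om\lB\cdot h_\Om\,dx,\qquad w:=\lB-\al h_\Om,
\]
so $w$ is smooth, divergence-free, tangent to the boundary and satisfies $\int_\Om w\cdot h_\Om\,dx=0$. Using $\curl h_\Om=0$ together with $\curl\lB=\la\lB$ yields the classical identity
\begin{equation}\label{E.planwrigid}
\curl w=\la w+\la\al\, h_\Om.
\end{equation}

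\textbf{Case $\al=0$.} Here $w=\lB\not\equiv 0$ and \eqref{E.planwrigid} reduces to $\curl w=\la w$. Hence $w$ is a nontrivial smooth solution of~\eqref{E.eigen} with parameter~$\la$, so by the cited spectral result~\cite[Proposition~6.1]{Acta} we get $\la\in\{\la_j\}$.

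\textbf{Case $\al\neq 0$.} Rescaling, the field $U:=\|h_\Om\|_{L^2}^{-2}\al^{-1}\lB$ is a nontrivial smooth solution of
\[
\curl U=\la U\text{ in }\Om,\qquad U\cdot N=0\text{ on }\pd\Om,\qquad\int_\Om U\cdot h_\Om\,dx=1.
\]
This is exactly the type of problem solved by the reference field $V_j$ at $\la=\la_j$ in~\cite[Theorem~1]{Boul}. The Fredholm alternative for the self-adjoint realisation of the curl operator on the axisymmetric toroidal domain~$\Om$ (whose discrete spectrum on $\cH:=\{u\in L^2:\Div u=0,\,u\cdot N=0,\,\int u\cdot h_\Om=0\}$ is precisely $\{\la_j\}$, again by~\cite[Proposition~6.1]{Acta}) ensures that such a $U$ exists if and only if $\la$ belongs to~$\{\la_j\}$; hence once more $\la=\la_j$ for some $j\geq 1$.

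The main technical point is the ``only if'' direction needed in the second case: a Fredholm argument excluding the existence of a $V$-type Beltrami field whenever $\la\notin\{\la_j\}$. Granted the spectral framework of~\cite{Acta} and the construction of~\cite{Boul}, this is essentially bookkeeping, and the rest of the argument reduces to the direct computation~\eqref{E.planwrigid}.
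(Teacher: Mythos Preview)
Your handling of $\la=0$ and of the case $\al=0$ is fine, but the case $\al\neq 0$ contains a genuine error. You claim that the problem
\[
\curl U=\la U\ \text{in }\Om,\qquad U\cdot N=0\ \text{on }\pd\Om,\qquad \int_\Om U\cdot h_\Om\,dx=1
\]
admits a solution only when $\la\in\{\la_j\}$, appealing to the Fredholm alternative for the self-adjoint curl on your space $\cH$. But $U\notin\cH$ by construction (its harmonic projection is nonzero), so the spectral theory of $\curl$ on $\cH$ does not govern $U$. In fact the conclusion is the opposite of what you assert: the result you cite, \cite[Theorem~1]{Boul}, shows that for \emph{every} real $\la$ there exists a Beltrami field tangent to $\pd\Om$ with unit harmonic projection (unique when $\la\notin\{\la_j\}$, unique modulo eigenfields otherwise). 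Hence the ``only if'' direction you call ``essentially bookkeeping'' is simply false, and the Hodge splitting cannot by itself force $\la$ into the eigenvalue set.

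This is precisely why the paper's argument is different and why the axisymmetry of $\Om$ is essential. Since the coefficients of the system $\curl\lB=\la\lB$ in cylindrical coordinates and the normal $N$ are $\te$-independent, the field $w:=\pd_\te\lB$ again satisfies $\curl w=\la w$ and $w\cdot N=0$, and now $\int_\Om w\cdot h_\Om\,dx=0$ automatically by periodicity in~$\te$. So $w$ \emph{does} lie in $\cH$, and if $\la\notin\{\la_j\}$ one concludes $w=0$, i.e.\ $\lB$ is axisymmetric. An axisymmetric divergence-free field admits a stream function $\psi(r,z)$, which is a first integral of $\lB$; Lemma~\ref{L.fi} then forces $\psi$ to be constant, so $\lB=\lB^\te(r,z)\,\pd_\te$ has identity Poincar\'e map, contradicting $\cJ(B)\in\cUni$. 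Both the rotational symmetry of the domain and the non-integrability hypothesis are therefore indispensable ingredients that your purely spectral approach is missing.
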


\begin{proof}
In terms of the components of~$\lB$ and of the normal vector~$N$ in cylindrical coordinates,
\begin{align*}
\lB&=: \lB^r(r,\te,z)\, \partial_r + \lB^\te (r,\te,z)\, \partial_\te+ \lB^z(r,\te,z)\, \partial_z\,,\\
N&=: N^r(r,z)\,\partial_r+ N^z(r,z)\, \partial_z\,,
\end{align*}
the system~\eqref{E.existela} reads as
\begin{align}\label{E.system}
		\begin{split}
	\pd_r(r \lB^r)+r\pd_\te \lB^\te+r\pd_z \lB^z&=0\,,\\
	\pd_\te \lB^z-r^2\pd_z\lB^\te - \la r\lB^r&=0\,,\\
	\pd_z\lB^r-\pd_r \lB^z- \la r\lB^\te&=0\,,\\
	\pd_r(r^2\lB^\te)-\pd_\te \lB^r-\la r \lB^z&=0\,,
	\end{split}
\end{align}
with the boundary condition
\[
\lB^r(r,\te,z)\, N^r(r,z)+ \lB^z(r,\te,z)\, N^z(r,z)=0\qquad \text{on }\pd\Om\,.
\]

As the coefficients of this system and the functions $N^r,N^z$ are independent of~$\te\in\RR/2\pi\ZZ $, we can differentiate the system and the boundary condition with respect to this variable to conclude that the vector field
\[
w:= \pd_\te \lB^r(r,\te,z)\, \partial_r + \pd_\te\lB^\te (r,\te,z)\, \partial_\te+ \pd_\te\lB^z(r,\te,z)\, \partial_z
\]
satisfies the equation
\[
\curl w=\la w\quad \text{in }\Om\,,\qquad w\cdot N=0\quad \text{on }\pd \Om\,.
\]
Furthermore, $w$ is $L^2$-orthogonal to the space of harmonic fields because one can write, using~\eqref{E.defhOm} and the $2\pi$-periodicity in~$\te$ of $\lB$,
\[
\int_{\Om} w\cdot h_\Om\, dx= \int_{\Si}\left( \int_0^{2\pi}\pd_\te \lB^\te (r,\te,z)\, d\te\right)rdr\, dz=0\,.
\]

Suppose that $\la$ is not an eigenvalue. Then we conclude (see Equation~\eqref{E.eigen}) that $w=0$, which means that the field $\lB$ is axisymmetric, i.e., the functions $\lB^r, \lB^\theta, \lB^z$ do not depend on~$\theta$.

It is then easy to see that one can use the first equation of the system~\eqref{E.system} to write the field~$\lB$ in terms of a smooth scalar function $\psi(r,z)$ and the swirl $\lB^\te(r,z)$ as
\[
\lB=\frac1r\left[\pd_z\psi(r,z) \,\partial_r -\pd_r\psi(r,z)\, \partial_z \right]+\lB^\te(r,z)\, \partial_\te\,.
\]
It is obvious that $\psi$ is a first integral of $\lB$, that is, $\lB\cdot\nabla\psi=0$.

By assumption, $\cJ(B)\in\cUni $, so Lemma~\ref{L.fi} then ensures that~$\psi$ is constant, so in fact
\[
\lB= \lB^\te(r,z)\,\partial_\te
\]
with $\lB^\te\neq0$. Then the Poincar\'e map~$\Pi_{\lB}$, and therefore $\cJ(B)$ by the conjugacy~\eqref{E.PilB}, are the identity. This contradicts the assumption that $\cJ(B)$ is non-integrable (condition~(ii) in Theorem~\ref{T.cU}), and the lemma then follows.
\end{proof}

We are now ready to finish the proof of Theorem~\ref{T.rigidity}. By Lemma~\ref{L.laeigen}, we know that there exists some eigenvalue~$\la_j$ such that the vector field $\lB$ satisfies Equation~\eqref{E.existela} with $\la=\la_j$. To take into account the harmonic part of~$\lB$, let us define the quantity
\[
c_0:=\int_{\Om} h_\Om\cdot \lB\, dx\,,
\]
and using the vector field $V_j$ defined in Section~\ref{SS.NI}, we note that
\[
v:=\lB- c_0 V_j
\]
satisfies the equation
\begin{gather*}
\curl v=\la_j v\quad \text{in }\Om\,,\qquad v\cdot N=0\quad \text{on } \pd \Om\,,\qquad \int_{\Om} v\cdot h_\Om\, dx=0\,,
\end{gather*}
so it is an eigenfield with eigenvalue~$\la_j$ by Equation~\eqref{E.eigen}. This yields the expression~\eqref{E.formu}, so Theorem~\ref{T.rigidity} is proven.

\section*{Acknowledgements}

The authors are grateful to Simon Candelaresi, Gunnar Hornig, Stuart Hudson, David MacTaggart and Anthony Yeates for their comments and suggestions on plasma physics. We are indebted to Masayuki Asaoka and Pierre Berger for their explanations concerning the Newhouse domain. This work has received funding from the European Research Council (ERC) under the European Union's Horizon 2020 research and innovation programme through the grant agreement~862342 (A.E.). It is partially supported by the grants CEX2019-000904-S and PID2019-106715GB GB-C21 (D.P.-S.) funded by MCIN/AEI/10.13039/501100011033, and Ayudas Fundaci\'on BBVA a Proyectos de Investigaci\'on Cient\'ifica 2021 (D.P.-S.).

\appendix

\section{Fields with Reeb components}
\label{A.Reeb}

In this appendix, we recall the precise statement and proof of Cieliebak and Volkov's result~\cite{Cieliebak} that MHS equilibria cannot exhibit a Reeb component (see also~\cite{PRT} for a significant extension of this result using the theory of plugs). We recall that a {\em Reeb component}\/ of a vector field $\lB$ is a smooth compact embedded surface $\cA$, diffeomorphic to a cylinder, and invariant under the flow $X^t$ of~$\lB $, such that:
\begin{enumerate}
\item $\pd\cA$ consists of two periodic orbits of $\lB$ that induce the boundary orientation of~$\pd\cA$.
\item $X^t(x)$ converges to $\pd\cA$ as $t\to\pm\infty$ for all interior points $x$ of $\cA$.
\end{enumerate}
The observation of Cieliebak and Volkov is based on an application of Stokes' theorem:

\begin{lemma}[\cite{Cieliebak}, Lemma 2.3]\label{T.Cielieback}
	Assume that $\lB $ is a smooth MHS equilibrium in an open set $U\subset\RR^3$. Then~$\lB $ cannot have any Reeb components in $U$.
\end{lemma}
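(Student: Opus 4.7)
My plan is to extract two incompatible consequences of the MHS equation on the Reeb component~$\cA$: on the one hand, the normal component of $\curl\lB$ on $\cA$ vanishes, so Stokes' theorem forces the total circulation of $\lB$ around $\pd\cA$ to be zero; on the other hand, both boundary components of $\cA$ are periodic orbits of $\lB$ along which the integral of $\lB\cdot dl$ is strictly positive. Comparing these gives a contradiction.

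First I would note that the equation $\lB\times\curl\lB+\nabla P=0$ yields $\lB\cdot\nabla P=0$, so $P$ is constant along orbits of~$\lB$. Since $\pd\cA=\gamma_0\cup\gamma_1$ consists of two periodic orbits, $P$ is constant on each~$\gamma_i$, taking some value $P_i$. For any interior point $x\in\cA$, the orbit through~$x$ accumulates on $\gamma_1$ as $t\to+\infty$ and on $\gamma_0$ as $t\to-\infty$ (by the definition of a Reeb component), so continuity of $P$ forces $P(x)=P_0=P_1$. Hence $P|_\cA$ is a constant, and in particular $\nabla P$ is everywhere normal to~$\cA$ along~$\cA$.

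Next, since $\lB\times\curl\lB=-\nabla P$ is purely normal to~$\cA$ on~$\cA$, I would decompose $\curl\lB$ along an orthonormal frame $\{\lB/|\lB|,\,e,\,N_\cA\}$ adapted to~$\cA$, with $e$ the tangent direction orthogonal to~$\lB$ in~$\cA$. A direct computation gives
\[
\lB\times\curl\lB=(\curl\lB\cdot N_\cA)\,(\lB\times N_\cA)+(\curl\lB\cdot e)\,(\lB\times e),
\]
and the second summand is proportional to~$N_\cA$ while the first is proportional to the tangent vector $\lB\times N_\cA=-|\lB|\,e$. The vanishing of the tangential component of $\lB\times\curl\lB$ on~$\cA$ therefore forces $\curl\lB\cdot N_\cA=0$ pointwise on~$\cA$.

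Finally, I would apply Stokes' theorem to the 1-form $\beta=\lB^\flat$ on the oriented surface~$\cA$:
\[
\int_{\pd\cA}\lB\cdot dl=\int_\cA (\curl\lB)\cdot N_\cA\,dA=0.
\]
By the Reeb component hypothesis, both $\gamma_0$ and $\gamma_1$ induce the boundary orientation of~$\pd\cA$, so the left-hand side equals $\oint_{\gamma_0}\lB\cdot dl+\oint_{\gamma_1}\lB\cdot dl$. Parametrizing each periodic orbit $\gamma_i$ by the flow of $\lB$ over a period $T_i>0$, one has $\oint_{\gamma_i}\lB\cdot dl=\int_0^{T_i}|\lB(\gamma_i(t))|^2\,dt>0$ since $\lB$ does not vanish on a nonconstant periodic orbit. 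This contradicts the Stokes identity and proves the lemma.

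The main obstacle I expect is the regularity/orientation bookkeeping in the Stokes computation: verifying carefully that $\cA$ (a smooth compact embedded cylinder) is a valid 2-chain for Stokes' theorem with boundary $\gamma_0+\gamma_1$ oriented as prescribed, and justifying the frame decomposition along~$\cA$. Both are routine but require that $\lB$ be nonvanishing along~$\cA$, which is automatic on $\pd\cA$ (nonconstant periodic orbits) and extends to a neighborhood by continuity, with the tangential decomposition of $\curl\lB$ making sense on all of~$\cA$.
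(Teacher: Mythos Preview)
Your proposal is correct and follows essentially the same route as the paper's proof: show $P|_\cA$ is constant, deduce $\curl\lB\cdot N_\cA=0$ on~$\cA$, apply Stokes, and contradict the strict positivity of $\int_{\pd\cA}\lB\cdot d\ell$ coming from condition~(i). Your only loose end is the worry about $\lB$ vanishing on~$\cA$: this is ruled out directly by the Reeb hypothesis, since an interior zero would be a fixed point whose orbit does not converge to~$\pd\cA$, so $\lB|_\cA$ is nowhere zero and your frame decomposition is valid everywhere.
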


\begin{proof}
Suppose that $\cA\subset U$ is a Reeb component of $\lB$. By assumption, $\lB$ satisfies the MHS equations
\[
\lB \times \curl \lB+\nabla P=0\,, \qquad \Div \lB=0\,,
\]
for some scalar function $P\in C^\infty(U)$. It is elementary to check that $P$ is a first integral of~$\lB $, so the assumption that $\cA$ is invariant under the flow of $\lB$ implies that $P|_{\cA}$ is a first integral of $\lB|_{\cA}$. Since the dynamics of a Reeb component does not admit nontrivial first integrals, we infer that $P|_{\cA}$ is a constant. Moreover, since $\lB |_{\cA}$ is nonzero and tangent to~$\cA$, we easily conclude from the MHS equation that
$$\curl \lB \cdot N_\cA=0$$
on~$\cA$, where $N_\cA$ is a unit normal vector of the surface~$\cA$. Using Stokes' formula, we then obtain
\[
0=\int_{\cA} \curl \lB \cdot N_\cA \, d\si = \int_{\pd\cA}\lB \cdot d\ell\,.
\]
But the fact that $\lB $ induces the boundary orientation on~$\pd\cA$ ensures that the above line integral must be strictly positive, which is a contradiction.
\end{proof}

By a simple scaling argument, Lemma~\ref{T.Cielieback} implies that, on any domain of $\RR^3$, there is a set of divergence-free vector fields that are not topologically equivalent to an MHS equilibrium that is dense in the Sobolev space $H^s$ for any $s<\frac32$:

\begin{corollary}\label{C.Reeb}
	Consider a smooth divergence-free vector field $B_0$, defined on an open set~$U\subset\RR^3$, take a point~$x^0$ where $B_0$ does not vanish and any real number $0\leq s<\frac 32$. Then, for any $\de>0$, there exists another smooth divergence-free vector field $\tB_0$ on~$U$ which has a Reeb component, coincides with~$B_0$ outside the ball centered at~$x^0$ of radius~$\de $, and approximates~$B_0$ as
	\[
	\|B_0-\tB_0\|_{H^{s}(U)}<\de\,.
	\]
	The field~$\tB_0$ is not topologically equivalent to an MHS equilibrium.
\end{corollary}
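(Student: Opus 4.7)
The plan is to insert a small, scaled ``Reeb plug'' near $x^0$ so that $\tilde B_0$ has a Reeb component, and then appeal to Lemma~\ref{T.Cielieback} together with the fact that Reeb components are preserved under pushforward by a volume-preserving diffeomorphism. To set up the insertion, I would first invoke the divergence-free flow-box theorem: since $B_0(x^0)\neq 0$ and $\Div B_0=0$, there is a smooth diffeomorphism $\Phi$ from a neighborhood $V'$ of the origin in $\RR^3$ onto a neighborhood $V\subset B(x^0,\delta)\cap U$ of $x^0$ with $\det D\Phi\equiv 1$ and $\Phi_*(\partial_z)=B_0$ on $V$. The volume-preserving condition is essential so that pushforward preserves the divergence-free class.

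Next, I would construct a smooth divergence-free field $P$ on the unit ball $B(0,1)\subset\RR^3$ that coincides with $\partial_z$ in a neighborhood of $\partial B(0,1)$ and contains a Reeb component $\cA$ in its interior. An explicit construction uses cylindrical coordinates $(\rho,\theta,z)$ with an axisymmetric ansatz $P=u(\rho,z)\,\partial_z+w(\rho,z)\,\partial_\theta$ having no $\partial_\rho$ component, so that every cylinder $\{\rho=\mathrm{const}\}$ is flow-invariant and divergence-freeness reduces to $\partial_z u=0$; choosing the profiles so that on an annular patch $\{\rho=\rho_0,\,|z|\le z_0\}$ the angular component $w$ vanishes at $z=\pm z_0$ (producing two circular boundary orbits), the axial component drives interior points toward these boundary orbits (producing the required asymptotic spiraling), and the whole field matches smoothly to $\partial_z$ near $\partial B(0,1)$, yields the desired plug. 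This is essentially a divergence-free variant of the scaled Wilson-type plugs mentioned in the introduction. Set $Q:=P-\partial_z$, a smooth, divergence-free, compactly supported perturbation inside $B(0,1)$.

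For $\epsilon>0$ small, I would scale $Q_\epsilon(y):=Q(y/\epsilon)$ (still divergence-free, supported in $B(0,\epsilon)\subset V'$) and define $\tilde B_0:=B_0+\Phi_*Q_\epsilon$ on $U$, extending $\Phi_*Q_\epsilon$ by $0$ outside~$V$. A direct Fourier computation gives the scaling estimate
\[
\|Q_\epsilon\|_{H^s(\RR^3)}\le C\,\epsilon^{3/2-s}\,\|Q\|_{H^s(\RR^3)},
\]
which tends to $0$ as $\epsilon\to 0$ precisely because $s<3/2$; continuity of pushforward by the fixed diffeomorphism~$\Phi$ between bounded open sets transfers this smallness to $\|\tilde B_0-B_0\|_{H^s(U)}<\delta$. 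By construction $\tilde B_0$ is smooth, divergence-free, agrees with $B_0$ outside $B(x^0,\delta)$, and on $V$ equals $\Phi_*P_\epsilon$, where $P_\epsilon(y):=P(y/\epsilon)$; hence the image $\Phi(\epsilon\cdot\cA)\subset B(x^0,\delta)$ is a Reeb component of $\tilde B_0$. If a volume-preserving diffeomorphism $X$ sent $\tilde B_0$ to a smooth MHS equilibrium $\bar B=X_*\tilde B_0$, then $X(\Phi(\epsilon\cdot\cA))$ would be a Reeb component of $\bar B$, contradicting Lemma~\ref{T.Cielieback}. The only step requiring genuine work is the explicit construction of the divergence-free Reeb plug~$P$ with the prescribed boundary behavior and dynamics on~$\cA$; the flow-box straightening, the $\epsilon^{3/2-s}$ Fourier scaling, and the conjugation-invariance of Reeb components are all routine.
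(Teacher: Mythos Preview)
Your overall strategy is correct and essentially the same as the paper's: insert a scaled divergence-free Reeb plug near~$x^0$, use the scaling exponent $\epsilon^{3/2-s}$ to make the $H^s$ perturbation small, and invoke Lemma~\ref{T.Cielieback}. The paper does this via the affine map $\phi_\epsilon(y)=x^0+\epsilon y$ (so the pulled-back field is only $\epsilon$-close to the constant $B_0(x^0)$, and one appeals to Kuperberg's volume-preserving plug~\cite{Kup} for nearly constant fields), whereas you first straighten $B_0$ exactly to $\partial_z$ by a volume-preserving flow box and then insert a fixed plug for~$\partial_z$. Both routes lead to the same $H^s$ estimate and conclusion.

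There is, however, a genuine error in your sketch of the plug. With the ansatz $P=u(\rho,z)\,\partial_z+w(\rho,z)\,\partial_\theta$ (no $\partial_\rho$ component), the divergence-free condition $\partial_z u=0$ forces $u=u(\rho)$. On the invariant cylinder $\{\rho=\rho_0\}$ the $z$-dynamics is then $\dot z=u(\rho_0)$, a constant: either every circle $\{\rho=\rho_0,\ z=c\}$ is periodic (if $u(\rho_0)=0$), or no orbit stays in a bounded $z$-range (if $u(\rho_0)\neq 0$). In neither case do interior points spiral toward two boundary periodic orbits, so this ansatz cannot carry a Reeb component. A correct divergence-free Reeb plug must have a nontrivial radial component, and the construction is not elementary; since you already flag this as the one nontrivial step, the clean fix is simply to cite Kuperberg's volume-preserving plug~\cite{Kup}, exactly as the paper does, rather than attempt an explicit formula.
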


\begin{proof}
Consider the affine change of variables $\phi_\ep (y):= x^0 + \ep y$, where $\ep>0$ is a small parameter. It maps the ball $\mathbb{B}_2$ centered at the origin of radius~2 onto the ball centered at~$x^0$ of radius~$2\ep$. Now consider the push-forward $(\phi_\ep^{-1})_* (B_0)$, which is still a divergence-free vector field in the $y$-coordinates. Note that, on the ball $\mathbb{B}_2$, $(\phi_\ep^{-1})_* (B_0)$ is $\ep$-close to the constant field $B_0(x^0)\neq 0$ in the smooth topology, in the sense that the $C^\infty(\mathbb{B}_2)$-distance between these fields is at most $C_1\ep$ for some fixed constant~$C_1$. Hence one can then use a Wilson-type plug construction in the volume-preserving setting~\cite{Kup} to obtain another divergence-free $C^\infty$ vector field $B_0^\ep$ which coincides with $(\phi_\ep^{-1})_* (B_0)$ outside the ball $\mathbb{B}_1$ of radius~1 and which features a Reeb component in $\mathbb{B}_1$. The $\ep$-closeness to a constant vector field easily implies the uniform bound
$$\|B_0^\ep- (\phi_\ep^{-1})_* (B_0)\|_{C^2(\mathbb{B}_1)}<C_2$$
for all small~$\ep$ and some $\ep$-independent constant $C_2$. If we now take the pushed-forward vector field $\tB_0^\ep:= (\phi_\ep)_*(B_0^\ep)$, which is divergence-free and coincides with $B_0$ in the complement of the ball of radius $\ep$ centered at $x^0$, for any $s<\frac32$ one can estimate
\begin{align*}
\|B_0-\tB_0^\ep \|_{H^{s}(U)}^2&=	\|B_0-\tB_0^\ep \|_{L^2(\phi_\ep(\mathbb{B}_1))}^2+ \||\nabla|^s(B_0-\tB_0^\ep) \|_{L^2(\phi_\ep(\mathbb{B}_1))}^2\\
&\leq C\ep^3  + C\ep^{3-2s}<\delta\,,
\end{align*}
if we pick $\ep$ sufficiently small (depending on~$\de $). Here $C$ depends on~$C_2$ but not on~$\ep$. The corollary then follows from Lemma~\ref{T.Cielieback} by setting $\tB_0:=\tB_0^{\ep}$ and noticing that this vector field exhibits a Reeb component in $\phi_{\ep}(\mathbb{B}_1)$.
\end{proof}

\begin{remark}
In the particular case that $U$ is a toroidal domain of $\RR^3$, it is clear that a vector field $B_0$ which exhibits a Reeb component cannot be braided. Accordingly, Corollary~\ref{C.Reeb} cannot be used to prove an analog of Theorem~\ref{T.main}, which is a statement about generic braided fields. Any construction that makes use of plugs~\cite{PRT} will have the same limitation, since plugs do not admit a global transverse section.
\end{remark}

\section{The Newhouse domain}
\label{A.Newhouse}

If $\bD$ is the unit disk of $\RR^2$ and $\mu_{\bD}$ is the standard area measure, we denote by $\Diff^+_{\mu_{\bD}}(\overline\bD)$ the group of smooth area-preserving diffeomorphisms of the disk that preserve the orientation. In this article we have been mainly concerned with the group $\diff$ of $\mu$-preserving diffeomorphisms connected with the identity of the disk-like compact surface $\Sigma$, but in view of the Dacorogna--Moser lemma~\cite{DM}, there is no actual difference between $\Diff^+_{\mu_{\bD}}(\overline\bD)$ and $\diff$:

\begin{lemma}\label{L.SitoD}
There exists a $C^\infty$ diffeomorphism $\Phi: \overline\bD\to \BSi$ such that $\Phi^*\mu=c\mu_{\bD}$ for some constant $c>0$. The conjugation $F\mapsto \Phi^{-1}\circ F\circ \Phi$ defines a group isomorphism $\diff\to \Diff^+_{\mu_{\bD}}(\overline\bD)$.
\end{lemma}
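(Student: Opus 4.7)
The plan is to build $\Phi$ in two steps: first produce any orientation-preserving smooth diffeomorphism $\Phi_0:\overline\bD\to\BSi$, then post-compose with a self-diffeomorphism of $\overline\bD$ that adjusts the pulled-back volume form. Since $\Sigma$ is a compact surface with boundary, diffeomorphic to the standard closed disk (the section $\Sigma$ is described in the introduction as diffeomorphic to $\overline{\mathbb D}$, with analytic boundary), such a $\Phi_0$ exists and can be taken $C^\infty$ and orientation-preserving. Then $\Phi_0^*\mu$ is a positive smooth volume form on $\overline\bD$, of total mass $\mu(\BSi)$. Setting $c:=\mu(\BSi)/\mu_\bD(\overline\bD)>0$, the two volume forms $\Phi_0^*\mu$ and $c\mu_\bD$ on $\overline\bD$ have the same total mass.

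The key step is now to invoke the Dacorogna--Moser theorem~\cite{DM}, exactly as in the proof of Lemma~\ref{L.mapJ}: there exists a $C^\infty$ diffeomorphism $\Psi:\overline\bD\to\overline\bD$, equal to the identity on $\pd\bD$, such that $\Psi^*(\Phi_0^*\mu)=c\,\mu_\bD$. Setting $\Phi:=\Phi_0\circ\Psi$ we obtain a smooth diffeomorphism $\Phi:\overline\bD\to\BSi$ satisfying $\Phi^*\mu=c\,\mu_\bD$, as required.

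Next I would verify the isomorphism statement. The conjugation map $\Theta:F\mapsto \Phi^{-1}\circ F\circ \Phi$ is clearly a group homomorphism between the full groups of self-diffeomorphisms, with inverse $G\mapsto \Phi\circ G\circ\Phi^{-1}$. If $F\in\diff$, then $F^*\mu=\mu$, hence
\[
(\Theta(F))^*\mu_\bD=\tfrac1c\,(\Theta(F))^*(\Phi^*\mu)=\tfrac1c(\Phi\circ\Theta(F))^*\mu=\tfrac1c(F\circ\Phi)^*\mu=\tfrac1c\Phi^*\mu=\mu_\bD,
\]
and $\Theta(F)$ is orientation-preserving because $F$ is (as $F$ lies in the identity component) and $\Phi$ fixes the orientation; thus $\Theta(F)\in\Diff^+_{\mu_\bD}(\overline\bD)$. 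Conversely, if $G\in\Diff^+_{\mu_\bD}(\overline\bD)$, the same computation shows that $\Phi\circ G\circ\Phi^{-1}$ preserves $\mu$. To see that it lies in the identity component $\diff$, one uses the well-known fact that $\Diff^+_{\mu_\bD}(\overline\bD)$ is itself connected: by the smooth Schoenflies-type result of Smale, $\Diff^+(\overline\bD)$ is contractible, and Moser's trick then shows that any orientation- and area-preserving diffeomorphism of $\overline\bD$ is isotopic to the identity through area-preserving diffeomorphisms. Conjugation by $\Phi$ preserves isotopy classes, so $\Theta$ maps $\diff$ bijectively onto $\Diff^+_{\mu_\bD}(\overline\bD)$.

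The only potentially delicate point is the matching of isotopy classes, but in dimension two and for the disk this is standard and requires no additional input beyond Moser's isotopy lemma, already used repeatedly in the paper. All other ingredients are routine, so I do not anticipate any serious obstacle.
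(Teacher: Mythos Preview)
Your proposal is correct and is precisely the argument the paper has in mind: the paper does not spell out a proof of this lemma, merely attributing it to the Dacorogna--Moser lemma~\cite{DM}, and your two-step construction (arbitrary diffeomorphism $\Phi_0$, then volume-adjusting self-map via~\cite{DM}) is exactly the intended route. Your treatment of the isotopy-class issue via Smale's contractibility of $\Diff^+(\overline\bD)$ together with Moser's trick is the standard way to reconcile the slightly different conventions for the superscript ``$+$'' used for $\BSi$ (isotopic to the identity) and for $\overline\bD$ (orientation-preserving).
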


This elementary lemma ensures that we can apply any result from the theory of area-preserving diffeomorphims of the disk in our context of $\diff$. In particular, a landmark in the study of conservative dynamical systems is the Duarte--Newhouse theorem~\cite{Newhouse,Duarte} showing that the so-called Newhouse domain of the group $\Diff^+_{\mu_{\bD}}(\overline\bD)$ is a non-empty open set. Recall that the {\em Newhouse domain}\/ is defined as the set $\cN\subset\Diff^+_{\mu_{\bD}}(\overline\bD)$ which is the interior of the closure in the smooth topology of the diffeomorphisms exhibiting a hyperbolic periodic point with a homoclinic tangency.

The dynamics of the diffeomorphisms in the Newhouse domain is extremely rich and wild~\cite{GST,Tura15,Berger}. In particular, there is a residual subset of $\cN$ that exhibits fast growth of periodic points and universal dynamics (i.e., maps whose sets of renormalized iterations approximate every
possible dynamics arbitrarily well). The Newhouse domain is dense in the group $\Diff^1_{\mu_{\bD}}(\overline\bD)$ of $C^1$ area-preserving diffeomorphisms of the disk that preserve the orientation; this is a consequence of a theorem of Newhouse~\cite{Newhouse2} and of the fact that there are no Anosov diffeomorphisms of the disk. It is conjectured~\cite{Tura15} that an analogous result holds true in the $C^r$ topology, $r\geq2$, so that in particular the Newhouse domain is dense in $\Diff^+_{\mu_{\bD}}(\overline\bD)$. This conjecture remains wide open. The reader may consult~\cite{Turaev,Berger} for an account on the subject. As an aside remark, we mention that it has been proved recently that the Newhouse domain is nonempty for the space of Beltrami fields in $\RR^3$~\cite{BFP}.

\section{Ideal compressible MHD relaxation}
A non-resistive compressible plasma is described by the MHD~system
\begin{gather}
\pd_t\rho+\Div(\rho v)=0\,,\notag\\
	\pd_t B= \curl(v\times B)\,,\label{E.magdyn2}\\
	\pd_t (\rho v) + \Div(\rho v\otimes v)=\nu\De v+ B\times\curl B+\nabla P\,,\notag\\
	\Div B=0\,, \notag
\end{gather}
where $\rho$ is the plasma mass density (a generally non-constant positive function). The compressible MHD relaxation process is based on the observation that Equation~\eqref{E.magdyn2} can be written as a transport equation:

\begin{lemma}\label{L.compre}
Let $(\rho, v,B,P)$ be a smooth solution of the ideal compressible MHD equations. Then
\[
\partial_t\Big(\frac{B}{\rho}\Big)=\frac{B}{\rho}\cdot \nabla v - v\cdot \nabla \frac{B}{\rho}=:\Big[\frac{B}{\rho},v\Big]\,.
\]
In particular, if $X^t$ denotes the time $t$ flow of the vector field~$v$, this equation means that $\frac{B(x,t)}{\rho(x,t)}$ can be written in terms of the initial data $B_0(x),\rho_0(x)$ via the push-forward of the diffeomorphism $X^t$ (which is no longer volume-preserving):
\[
\frac{B(t,x)}{\rho(x,t)}=\frac{X^t_* B_0(x)}{\rho_0\circ (X^t)^{-1}}\,.
\]
\end{lemma}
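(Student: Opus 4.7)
The plan is to verify the identity by a direct calculation from the system~\eqref{E.magdyn2}, and then to identify the resulting evolution equation with the transport of vector fields by the (not necessarily volume-preserving) flow of~$v$.

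First, I would expand the magnetic induction equation by means of the standard vector identity $\curl(v\times B) = B\cdot\nabla v - v\cdot\nabla B + v\,\Div B - B\,\Div v$. Since $\Div B=0$, this reduces the magnetic equation to $\partial_t B = B\cdot\nabla v - v\cdot\nabla B - B\,\Div v$, while the continuity equation rewrites as $\partial_t\rho = -v\cdot\nabla\rho - \rho\,\Div v$. Applying the quotient rule
\[
\partial_t\!\left(\frac{B}{\rho}\right) = \frac{\partial_t B}{\rho} - \frac{B\,\partial_t\rho}{\rho^{2}}
\]
and substituting the two expansions, I would observe that the term $-(B/\rho)\Div v$ coming from $\partial_t B/\rho$ cancels exactly the $+(B/\rho)\Div v$ arising from $-B\,\partial_t\rho/\rho^2$. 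The remaining terms reorganize as $(B/\rho)\cdot\nabla v - v\cdot\nabla(B/\rho)$, which is precisely the bracket $[B/\rho,v]$. This compressibility cancellation is essentially the whole content of the first half of the lemma.

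For the flow formula, I would appeal to the classical fact that an evolution of the form $\partial_t W = [W,v]$ is equivalent to $W(t,\cdot) = X^t_* W_0$, where $X^t$ is the time-$t$ flow of the (not necessarily divergence-free) field~$v$. This is obtained by differentiating the push-forward identity $W(t)\circ X^t = DX^t\cdot W_0$ in~$t$ and using $\partial_t X^t = v\circ X^t$; importantly, no assumption on $\Div v$ is needed. Applying this with $W_0 = B_0/\rho_0$, and invoking the functoriality identity $X^t_*(fY) = (f\circ (X^t)^{-1})\,X^t_* Y$ for the product of a scalar and a vector field, one obtains
\[
\frac{B(t,x)}{\rho(t,x)} = X^t_*\!\left(\frac{B_0}{\rho_0}\right)(x) = \frac{(X^t_* B_0)(x)}{\rho_0\circ (X^t)^{-1}(x)}\,,
\]
which is the second formula displayed in the lemma.

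This is a bookkeeping identity rather than a deep dynamical statement, so there is no serious obstacle. The only point that needs care is to ensure that the two $\Div v$-contributions, one from the magnetic equation (after using $\Div B=0$) and the other from the continuity equation, cancel with the correct signs so that no compressibility factor survives in the final bracket form; everything else is classical transport theory applied to a possibly non volume-preserving flow.
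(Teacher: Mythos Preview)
Your proposal is correct and follows essentially the same route as the paper: both expand $\curl(v\times B)$ via the standard vector identity, use $\Div B=0$ and the continuity equation, and observe that the $\Div v$ contributions cancel to yield the Lie bracket $[B/\rho,v]$. The paper's proof is even terser than yours (it does not spell out the push-forward argument for the flow formula), so your additional justification of $W(t)=X^t_*W_0$ via differentiation of the push-forward is a welcome elaboration rather than a departure.
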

\begin{proof}
It follows from an elementary calculation and the first, the second and the fourth equations in the MHD system:
\begin{align*}
\partial_t\Big(\frac{B}{\rho}\Big)&=\frac{1}{\rho}\curl(v\times B)+\frac{B}{\rho^2}\Div(\rho v)\\
&= \frac{1}{\rho}[B,v]+\frac{v\cdot\nabla\rho}{\rho^2}B\\
&=\Big[\frac{B}{\rho},v\Big]\,.
\end{align*}
Here, $[\cdot,\cdot]$ denotes the Lie bracket of vector fields and we have used the well known identity for vector fields $X,Y$,
\[
\curl(X\times Y)=(\Div Y)X-(\Div X)Y+[Y,X]\,.
\]
\end{proof}

In view of Lemma~\ref{L.compre}, the ideal compressible MHD relaxation consists in transporting the magnetic field $B_0$, up to a positive proportionality factor, via a diffeomorphism that is not volume-preserving in general. In case that this process yields a ``well-behaved'' smooth limit $\lB$, it would be an MHS equilibrium in the plasma domain that is orbitally equivalent to the divergence-free vector field $B_0$. By ``orbital equivalence'', we mean that there exists a $C^\infty$ diffeomorphism $X$ (not necessarily volume-preserving) of the corresponding domain and a $C^\infty$ positive factor $g$ such that
$$\lB= gX_*B_0\,.$$

Since the Poincar\'e map of a vector field is invariant under multiplication of the field by a positive factor, and the obstructions we found for topological relaxation (Morse-Smale on the boundary, non-integrability and growth of hyperbolic periodic points) are invariant under arbitrary diffeomorphisms (not only those that are volume-preserving), we immediately infer that an analogue of Theorem~\ref{T.main} holds in the context of ideal compressible MHD relaxation. More precisely, using the same notation as in the statement of Theorem~\ref{T.main}, we have:

\begin{theorem}\label{T.main2}
Let $\Om\subset\RR^3$ be an analytic axisymmetric toroidal domain. The subset of braided fields that are not orbitally equivalent to any smooth magnetohydrostatic equilibrium on~$\Om$ is locally generic in~$\tcB$.
	
More precisely, let $B_0\in \tcB$ be a vector field whose Poincar\'e map $\Pi_{B_0}:\BSi\to\BSi$ satisfies the following conditions:
	\begin{enumerate}
		\item The restriction $\Pi_{B_0}|_{\pd\Si}$ is a Morse--Smale diffeomorphism of~$\pd\Si$.
		\item $\Pi_{B_0}$ is non-integrable. That is, there does not exist a nonconstant function $h\in C^\infty(\BSi)$ such that $h\circ \Pi_{B_0}=h$.
		\item For all $j,k\geq1$,
		\[
		\limsup_{n\to\infty} \frac{\#\Per(\Pi_{B_0},n)}{N_{j,k}(n)}=\infty\,,
		\]
		where $\#\Per(\Pi_{B_0},n)$ is the number of hyperbolic periodic points of the Poincar\'e map $\Pi_{B_0}$ whose least period is~$n$ and where $\{N_{j,k}(n)\}_{j,k,n=1}^\infty$ is certain fixed countable set of positive integers.
	\end{enumerate}
	Then there does not exist a smooth diffeomorphism $X:\BOm\to\BOm$ and a positive function $g\in C^\infty(\overline\Om)$ such that $\lB:= g X_*B_0$ is an MHS equilibrium on~$\Om$, that is,
	\begin{equation}\label{E.statEuler2}
	\lB\times \curl \lB+\nabla P=0\,,\qquad \Div \lB=0\,,\qquad \lB|_{\pd\Om}\cdot N=0		
	\end{equation}
	for some $P\in C^\infty(\BOm)$. Furthermore, the set of vector fields satisfying conditions (i)--(iii) is dense in a nonempty open subset of~$\tcB$.
\end{theorem}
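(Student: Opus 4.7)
The plan is to show that the proof of Theorem~\ref{T.main} carries over essentially verbatim, with the key observation being that the Poincar\'e map of a flow on a global transverse section depends only on the unparametrized orbit structure. More precisely, I would begin by noting that, under the operation $B_0\mapsto \lB=gX_*B_0$ with $g\in C^\infty(\BOm)$ positive and $X$ a smooth diffeomorphism, the (unparametrized) orbits of $\lB$ coincide with the images under $X$ of the orbits of $B_0$: multiplication by $g$ only reparametrizes time, and $X_*$ sends orbits to orbits. Consequently $\Sigma':=X(\Sigma)$ is a global disk-like transverse section of $\lB$, and the first-return map of $\lB$ on $\Sigma'$ equals $X|_\Sigma\circ \Pi_{B_0}\circ (X|_\Sigma)^{-1}$. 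Conditions (i)--(iii) for $\Pi_{B_0}$ are therefore transferred to this Poincar\'e map of $\lB$, because they are intrinsic properties invariant under smooth conjugation (not requiring volume preservation).

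Next I would adapt the rigidity theorem (Theorem~\ref{T.rigidity}). Its proof proceeds through Lemmas~\ref{L.fi},~\ref{L.existela} and~\ref{L.laeigen}, each of which only uses that $\lB$ admits a global disk-like transverse section on which the Poincar\'e map is non-integrable (and Morse--Smale on the boundary), together with the MHS equation and the axisymmetry of $\Om$. Specifically: Lemma~\ref{L.fi} becomes the statement that any smooth first integral $h$ of $\lB$ restricts to $\Sigma'$ as an invariant function of its Poincar\'e map, hence pulls back via $X$ to an invariant function of $\Pi_{B_0}$, which is constant by non-integrability. Lemma~\ref{L.existela} is then unchanged, since it derives $\curl\lB=\la\lB$ using only that $\lB$ admits no non-constant first integral, the MHS equation and the fact that $\lB$ does not vanish (which holds because $g>0$ and $B_0\neq0$). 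Lemma~\ref{L.laeigen} is unaffected as it only concerns $\lB$ itself. The conclusion $\lB=u_c^j$ follows exactly as before.

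The final step mirrors Proposition~\ref{P.final}: assuming such an orbital equivalence exists, we would have $\lB=u_c^j$ with $c/|c|\in \cC_j$, and the Poincar\'e map of $\lB$ on any global transverse section would then be analytic with at most $N_{j,k}(n)$ hyperbolic periodic points of minimal period $n$ (Lemma~\ref{L.N}). Since the number of hyperbolic periodic points of a given period is invariant both under smooth conjugation and under change of global transverse section (cf.~\cite[Chapter~3.1]{Palis}), this bound would also apply to $\Pi_{B_0}$, contradicting condition (iii). The residuality of the set of $B_0\in\tcB$ satisfying (i)--(iii) in a nonempty open subset of $\tcB$ is unchanged, since it only concerns the source field $B_0$ and uses the machinery of Section~\ref{S.cJ} together with Theorems~\ref{T.cU} and~\ref{T.Asaoka} exactly as in the proof of Theorem~\ref{T.main}.

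The only subtle point to check is that the Poincar\'e map on a transverse section really is invariant under positive rescaling of the vector field: since $gB$ and $B$ have the same integral curves as unparametrized arcs, their first-return maps on a common transverse section coincide as set-theoretic maps. This observation is what makes the argument work despite the fact that $X$ and $g$ need not preserve any volume form, and it is the conceptual heart of the extension. No new analytic or dynamical input is required beyond what was developed in Sections~\ref{S.outline}--\ref{S.rigidity}; the main ``obstacle'' is really just bookkeeping, checking that each step of the original proof uses only properties of the orbit structure and not of volume preservation, which indeed turns out to be the case.
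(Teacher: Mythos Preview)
Your proposal is correct and follows exactly the paper's own argument: the paper simply observes that the Poincar\'e map of a vector field is invariant under multiplication by a positive factor, and that the three obstructions (Morse--Smale on the boundary, non-integrability, and fast growth of hyperbolic periodic points) are invariant under conjugation by arbitrary smooth diffeomorphisms, so the proof of Theorem~\ref{T.main} applies verbatim. You have spelled out in more detail the bookkeeping that the paper compresses into one sentence, but the route is identical.
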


\bibliographystyle{amsplain}

\end{document}